\documentclass[11pt]{amsart} 

\usepackage[margin=1in]{geometry}       
\usepackage[english]{babel}             
\usepackage{CJKutf8}                    
\usepackage[utf8]{inputenc}             
\usepackage{amsmath,amssymb,amsthm}     
\usepackage{graphicx}		            
\usepackage[style=alphabetic]{biblatex} 
\usepackage{csquotes}                   
\usepackage{multirow}                   
\usepackage{booktabs}                   
\usepackage{hyperref}                   
\usepackage[all]{hypcap}                
\newtheorem{theorem}{Theorem}
\newtheorem{proposition}{Proposition}
\newtheorem{corollary}{Corollary}[theorem]

\theoremstyle{definition}
\newtheorem{example}{Example}


\newenvironment{manprop}[1]
    {\innermanprop}
    {\endinnermanprop}
    

\newenvironment{manthm}[1]
    {\innermanthm}
    {\endinnermanthm}
    

\newenvironment{mancor}[1]
    {\innermancor}
    {\endinnermancor}

\DeclareMathOperator{\arf}{Arf}

\title{The delta-unlinking number of algebraically split links}
\author{Anthony Bosman$^\dagger$, Jeannelle Green$^\dagger$, Gabriel Palacios$^\dagger$, Moises Reyes$^\dagger$, Noe Reyes$^\dagger$}
\date{\today}
\address{Department of Mathematics, Andrews University, 4260 Administration Dr., Berrien Springs, MI 49104}
\email{bosman@andrews.edu}
\thanks{2000 {\it Mathematics  Subject Classification. 57K10.}}
\thanks{$^\dagger$Partially supported by National Science Foundation grant DMS-1950644.}
\addbibresource{bibliography.bib}

\begin{document}
\maketitle
\thispagestyle{empty}

\begin{abstract}
    It is known that algebraically split links (links with vanishing pairwise  linking number) can be transformed into the trivial link by a series of local moves on the link diagram called delta-moves; we define the delta-unlinking number to be the minimum number of such moves needed. This generalizes the notion of delta-unknotting number, defined to be the minimum number of delta-moves needed to move a knot into the unknot. While the delta-unknotting number has been well-studied and calculated for prime knots, no prior such analysis has been conducted for the delta-unlinking number. We prove a number of lower and upper bounds on the delta-unlinking number, relating it to classical link invariants including unlinking number, 4-genus, and Arf invariant. This allows us to determine the precise value of the delta-unlinking number for algebraically split prime links with up to 9 crossings as well as determine the 4-genus for most of these links. 
\end{abstract}

\section*{Acknowledgments}
This work arises from the National Research Experience for Undergraduates Program conducted at Andrews University during the summer of 2021. Special appreciation to the National Science Foundation, the Mathematical Association of America, and the Department of Mathematics of Andrews University for their generous investment.

\section{Introduction}

An $m$-component link is the isotopy class of an embedding of $\sqcup_m S^1 \longrightarrow S^3$; a knot is a 1-component link. A link can be depicted as a diagram representing its projection onto the plane. The {\it $\Delta$-move} is the local move on a link diagram that transforms the region within a disk as in Figure \ref{fig:delta_move} and leaves the rest of the diagram unchanged. The $\Delta$-move is known to be a unknotting move \cite{murakami1989certain}; therefore, every knot $K$ can be deformed into the unknot via some sequence of $\Delta$-moves. The {\it $\Delta$-unknotting number} $u^\Delta(K)$ is the minimal number of $\Delta$-moves needed to deform $K$ into the unknot; it has been calculated for prime knots with up to 10 crossings \cite{nakamura1998delta}.

In a link, we allow the three strands of the $\Delta$-move to belong to any component(s) of the link; in the case that they all belong to the same component, it is called a {\it self $\Delta$-move} as in Figure \ref{fig:delta_types}.

\begin{figure}[h]
    \centering
    \includegraphics[width=0.4\textwidth]{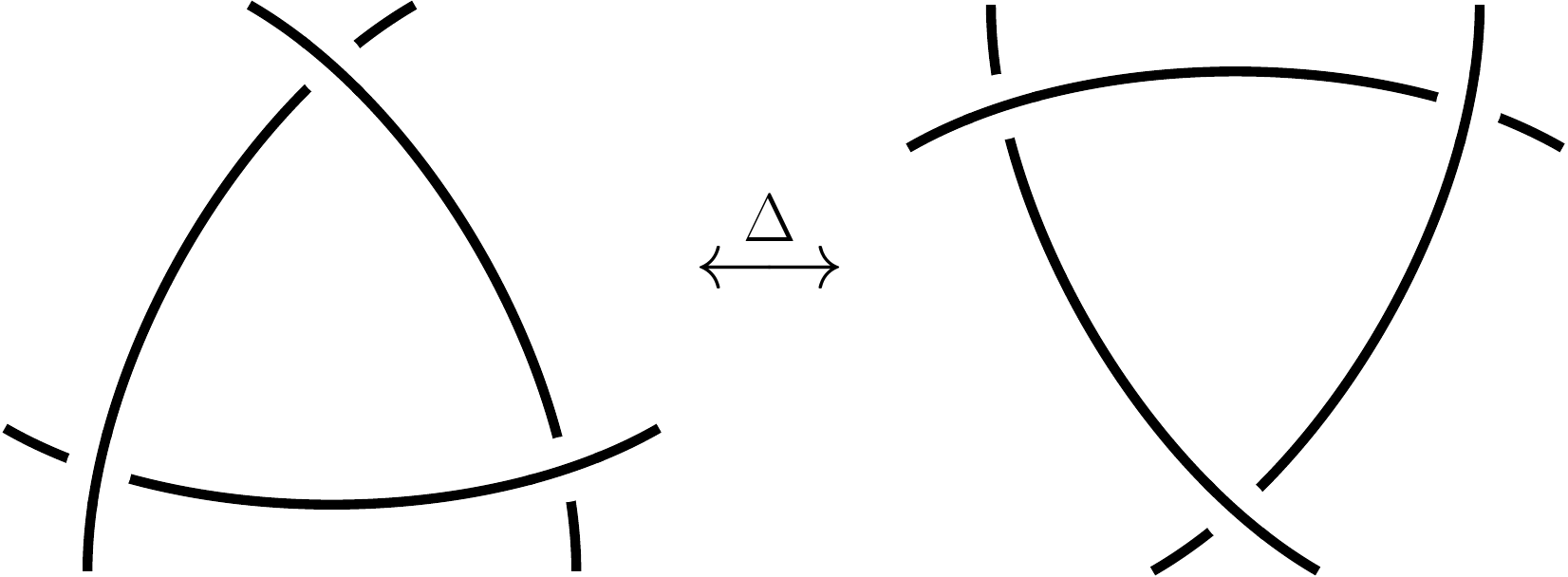}
    \caption{A $\Delta$-move.}
    \label{fig:delta_move}
\end{figure}

We call two links {\it (self) $\Delta$-equivalent} if one link can be deformed into the other by (self) $\Delta$-moves involving any number of components.

The $\Delta$-moves were first introduced in \cite{murakami1989certain}. In a link, it is not hard to see that a $\Delta$-move preserves linking number. Murakami and Nakanishi proved the converse, giving the following classification of links up to $\Delta$-equivalence:

\begin{theorem}[Theorem 1.1 in \cite{murakami1989certain}]
    Let $L = L_1 \sqcup L_2 \sqcup \cdots \sqcup L_m$ and $L' = L_1' \sqcup L_2' \sqcup \cdots \sqcup L_m'$ be ordered oriented $m$-component links. Then $L$ and $L'$ are $\Delta$-equivalent if and only if $lk(L_i,L_j) = lk(L'_i,L'_j)$ for $1 \leq i < j \leq m$, where $lk$ denotes the linking number.
\end{theorem}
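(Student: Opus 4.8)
The plan is to prove the two implications separately, treating the forward direction (that $\Delta$-equivalence forces equal pairwise linking numbers) as routine and reserving the real work for the converse. Throughout I will use that the $\Delta$-move is its own inverse up to planar isotopy, so that ``$\Delta$-equivalent'' is genuinely an equivalence relation; this lets me reduce the converse to the construction of a single normal form.

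For necessity I would compute $lk(L_i,L_j)$ as one half the signed count of crossings between $L_i$ and $L_j$ in a diagram, equivalently as the algebraic intersection number of $L_i$ with a Seifert surface for $L_j$. A $\Delta$-move alters the diagram only inside one disk, so it suffices to inspect the three strands there: for any two of them belonging to distinct components, the signed number of their mutual crossings inside the disk agrees before and after the move, the two local pictures differing only by crossings that cancel in pairs. Since nothing changes outside the disk, every pairwise signed crossing total---hence every $lk(L_i,L_j)$---is preserved. This makes precise the ``not hard to see'' assertion above.

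For sufficiency I would fix the linking data $\{l_{ij}\}$ and build a canonical model link $M(\{l_{ij}\})$: $m$ unknotted, unlinked round circles in which components $i$ and $j$ are joined by exactly $|l_{ij}|$ standard clasps of sign $\mathrm{sgn}(l_{ij})$ and nothing else. Since $M(\{l_{ij}\})$ depends only on the $l_{ij}$ up to isotopy, it is enough to show that an arbitrary $L$ with this data is $\Delta$-equivalent to $M(\{l_{ij}\})$; two links sharing their linking numbers are then both $\Delta$-equivalent to the same model, hence to one another. The reduction has two stages. First, using \emph{self} $\Delta$-moves---which involve a single component and so leave all linking numbers fixed---I unknot each component; this is possible because the $\Delta$-move is an unknotting operation for knots and each such move can be localized to a small disk meeting only one component. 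Second, I induct on $m$: since $L_m$ is now unknotted it bounds an embedded disk $D_m$, and each $L_j$ (for $j<m$) meets $D_m$ transversally with algebraic intersection number $L_j \cdot D_m = l_{jm}$. I then remove oppositely signed intersection pairs until only $|l_{jm}|$ points of constant sign remain, after which $L_m$ can be isotoped to a standard circle clasping the remaining components minimally; splitting it off leaves the $(m-1)$-component link $L_1\cup\cdots\cup L_{m-1}$, to which the inductive hypothesis applies.

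The heart of the argument---and the step I expect to be the main obstacle---is the cancellation of an oppositely signed pair of intersection points of $L_j$ with $D_m$. Geometrically $L_j$ pokes a ``finger'' through $D_m$ and back; retracting the finger cancels both points, and the only obstructions are other strands lying across its path. The key point, which makes the $\Delta$-move the right tool, is that the move preserves every pairwise linking number, so it can clear such an obstruction without disturbing the $l_{ij}$ I am trying to match, whereas a naive crossing change against a strand of another component would alter them. Making this precise means identifying exactly which local configuration of the finger against the obstructing strands each $\Delta$-move removes, arranging by isotopy that obstructions are met in this standard configuration, controlling signs so the intersection count descends to $|l_{jm}|$, and verifying that any knotting a component picks up during these moves can be undone afterward by further self $\Delta$-moves without changing the standardized intersection data. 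Setting up this induction so that the pieces do not interfere, rather than any single geometric move, is where the real difficulty lies.
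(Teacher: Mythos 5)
This statement is quoted by the paper as Theorem~1.1 of Murakami--Nakanishi and is not proved in the paper, so there is no in-paper argument to compare yours against; I can only assess your proposal on its own terms. Your easy direction is fine: the $\Delta$-move is supported in a disk, and for each pair of strands the signed count of inter-component crossings inside that disk is unchanged (in fact each pair of arcs crosses once before and once after with the same over/under data), so all $lk(L_i,L_j)$ are preserved. Your reduction-to-a-normal-form strategy for the converse is also the right general shape, and is essentially how such classification theorems are proved.

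However, as written the proposal has a genuine gap, and you identify it yourself: the entire mathematical content of the theorem is concentrated in the step you defer, namely showing that an oppositely signed pair of intersection points of $L_j$ with the disk $D_m$ can actually be cancelled by $\Delta$-moves when the retracting finger is obstructed by other strands. A $\Delta$-move does not ``clear an obstruction''; it realizes one specific local modification of three strands, and one must exhibit explicitly how compositions of such moves (or of the equivalent clasp-type moves of Figure~\ref{fig:clasp_pass_moves}) let a band pass through a strand of another component while restoring all linking numbers. This is precisely what is needed to show, for example, that the Borromean rings or the Whitehead link are $\Delta$-equivalent to trivial links, and no amount of isotopy bookkeeping substitutes for it. Two further points need attention: (i) your first stage invokes the fact that the $\Delta$-move unknots every knot, which is itself the other main theorem of the same Murakami--Nakanishi paper and cannot be taken as free; (ii) after standardizing $L_m$ you cannot simply ``split it off'' and induct, since the remaining components may still be tangled with the retained clasps and with each other in ways (e.g.\ Borromean-type triple linking with all $l_{ij}=0$) that the inductive hypothesis on $L_1\cup\cdots\cup L_{m-1}$ alone does not address. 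Until the cancellation lemma is actually proved, this is a plan for a proof rather than a proof.
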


Given $\Delta$-equivalent links $L$ and $L'$, we can define the {\it (self) $\Delta$-Gordian distance} $d_G^\Delta(L,L')$ between the links to be the minimal number of (self) $\Delta$-moves needed to deform one link into the other. In particular, if an $m$-component link $L$ is algebraically split, that is, if $L$ has vanishing pairwise linking numbers, then $L$ is $\Delta$-equivalent to the trivial link with $m$-components, denoted $0_1^m$. We denote the distance between an algebraically split link $L$ and the trivial link by $u^\Delta(L)$, the {\it $\Delta$-unlinking number}.

In the case of knots, it has been shown that $u^\Delta$ and $d^\Delta_G$ relate to many other well-known knot invariants including the unknotting number and Arf invariant \cite{murakami1989certain}. We generalize these relations to links (or in the case of the Arf invariant, to proper links) and relate $u^\Delta$ to other link invariants such as the 4-genus.

\noindent For instance, in Section \ref{sec:lower_bounds} we show:
\begin{manprop}{\ref{prop:half_unlinking_number}}
    Given an algebraically split link $L$,
    \[u^\Delta(L) \geq \frac{1}{2} u(L)\]
    where $u(L)$ denotes the unlinking number of $L$.
\end{manprop}

\noindent We also find a relationship with 4-genus:
\begin{manthm}{\ref{thm:4_genus_links_distance}}
    For $\Delta$-equivalent proper links $L, L'$, 
    \[d^\Delta_G(L,L') \geq |g_4(L) - g_4(L')|.\]
\end{manthm}

\noindent And hence:
\begin{mancor}{\ref{cor:4_genus_links_unlinking}}
    Given an algebraically split link $L$,
    \[u^{\Delta}(L) \geq g_4(L).\]
\end{mancor}

\noindent Moreover, we show the following in Section \ref{sec:other_methods}:
\begin{manthm}{\ref{thm:arf_links}}
    Given $\Delta$-equivalent proper links $L,L'$ we have \[d^\Delta_G(L,L') \equiv \arf(L) + \arf(L') \pmod{2}.\]
\end{manthm}

\noindent It then immediately follows:
\begin{mancor}{\ref{cor:arf}}
    Given an algebraically split link $L$,
    \[u^\Delta(L) \equiv \arf(L) \pmod{2}.\]
\end{mancor}

These and other such bounds allow us to determine $u^\Delta$ for algebraically split prime links up to 9 crossings; see Section \ref{sec:table}. We also determine the 4-genus for nearly all of these links.
\section{Lower bounds on $\Delta$-unlinking number}
\label{sec:lower_bounds}

\subsection{Unknotting number and unlinking number}

A $\Delta$-move is independent of the choice of orientation and mirroring \cite{murakami1989certain}. In particular, if $L$ and $L'$ are $\Delta$-equivalent then so are their mirrors $mL$ and $mL'$. Thus $u^\Delta(L)=u^\Delta(mL)$. The local moves in Figure \ref{fig:clasp_pass_moves} are equivalent to $\Delta$-moves \cite{taniyama2002clasp}; here the strands may belong to any component(s) of the link, except in the case of the self $\Delta$-move, in which case they must all belong to the same component. These alternative representations of the $\Delta$-move are instrumental for finding $\Delta$-pathways between $\Delta$-equivalent paths.

\begin{figure}
    \centering
    \includegraphics[width=0.8\textwidth]{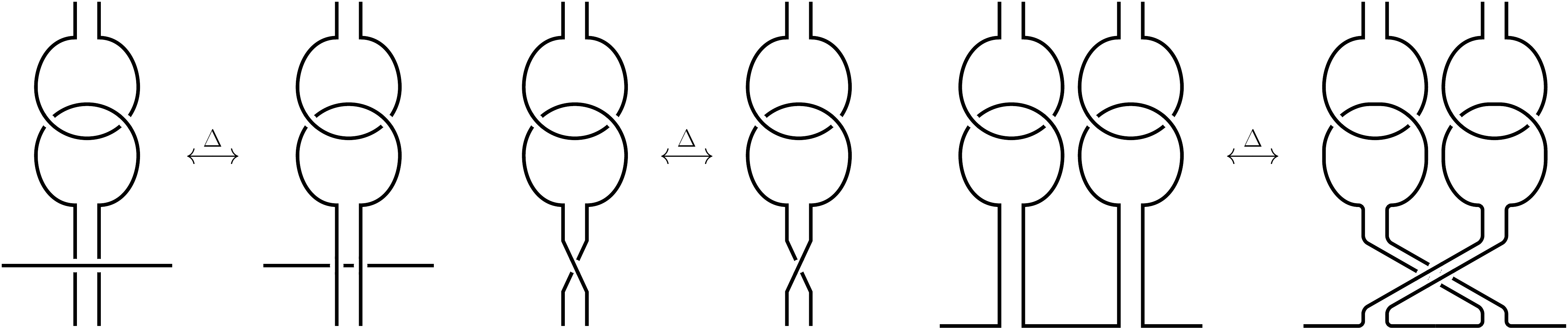}
    \caption{Moves equivalent to a $\Delta$-move.}
    \label{fig:clasp_pass_moves}
\end{figure}

Given links $L$ and $L'$, one may transform $L$ into $L'$ and then $L'$ into the trivial link, or vice versa. Thus $d^\Delta_G$ is a metric on a set of links with equivalent linking number. For algebraically split links, it then follows from the triangle inequality:
\[d^\Delta_G(L,L') \leq u^\Delta(L) + u^\Delta(L'), \hspace{0.8cm}
u^\Delta(L) \leq d^\Delta_G(L,L') + u^\Delta(L'), \hspace{0.8cm}
u^\Delta(L') \leq d^\Delta_G(L,L') + u^\Delta(L).\]

\noindent and so
\[|u^\Delta(L) - u^\Delta(L')| \leq d^\Delta_G(L,L') \leq u^\Delta(L) + u^\Delta(L').\]

\begin{figure}[h]
    \centering
    \includegraphics{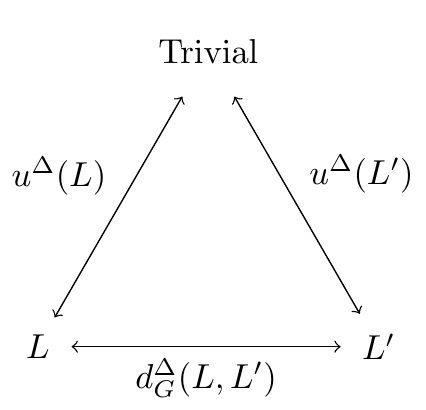}
    \caption{The $\Delta$-inequality.}
    \label{fig:triangle_inequality}
\end{figure}

A $\Delta$-move can be accomplished by two crossing changes; see Figure \ref{fig:delta_crossing}. Thus $d_G(L,L')\leq 2 d_G^\Delta(L,L')$ where $d_G(L,L')$ denotes the Gordian distance between $L$ and $L$, that is, the minimal number of crossing changes needed to transform $L$ into $L'$. It immediately follows:

\begin{proposition}
    \label{prop:half_unlinking_number}
    Given an algebraically split link $L$,
    \[u^\Delta(L) \geq \frac{1}{2} u(L)\]
    where $u(L)$ denotes the unlinking number of $L$.
\end{proposition}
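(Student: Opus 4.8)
The plan is to chain together two observations that the excerpt has already set up, so the proof will be almost immediate. First I would invoke the fact, established in the paragraph preceding the statement, that a single $\Delta$-move can be realized by exactly two crossing changes (Figure \ref{fig:delta_crossing}). This gives a direct comparison between the two Gordian-type distances: any sequence of $d^\Delta_G$ delta-moves transforming one link into another can be replaced by a sequence of at most $2\,d^\Delta_G$ crossing changes achieving the same transformation. Hence $d_G(L,L') \leq 2\, d^\Delta_G(L,L')$ for all $\Delta$-equivalent links.

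Next I would specialize this inequality to the case where $L'$ is the trivial link $0_1^m$ on the same number of components. Since $L$ is assumed algebraically split, Theorem 1 guarantees that $L$ is $\Delta$-equivalent to $0_1^m$, so both distances are defined. By the definitions recalled in the introduction, $d^\Delta_G(L, 0_1^m) = u^\Delta(L)$ is the $\Delta$-unlinking number, and $d_G(L, 0_1^m) = u(L)$ is the ordinary unlinking number, the minimal number of crossing changes needed to trivialize $L$. Substituting these identifications into the distance inequality yields
\[
u(L) = d_G(L, 0_1^m) \leq 2\, d^\Delta_G(L, 0_1^m) = 2\, u^\Delta(L).
\]

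Dividing by $2$ gives $u^\Delta(L) \geq \tfrac{1}{2} u(L)$, which is the claimed bound. There is essentially no obstacle here; the content is entirely front-loaded into the realization of a $\Delta$-move by two crossing changes and into the earlier identification of these specialized Gordian distances with $u^\Delta$ and $u$. The only point worth stating carefully is that the inequality $d_G \leq 2\, d^\Delta_G$ is genuinely a statement about realizability of a \emph{specific} minimal $\Delta$-sequence by crossing changes, rather than about the minimum over all crossing-change sequences being bounded term-by-term; but since replacing each delta-move in an optimal $\Delta$-sequence by its two crossing changes already produces \emph{some} crossing-change sequence of length $2\,u^\Delta(L)$ trivializing $L$, the minimality of $u(L)$ over all such sequences immediately forces $u(L) \leq 2\, u^\Delta(L)$. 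This makes the argument a one-line consequence of the preceding discussion.
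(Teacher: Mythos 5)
Your argument is correct and is exactly the paper's: the paper notes that a $\Delta$-move is realized by two crossing changes, deduces $d_G(L,L') \leq 2\,d_G^\Delta(L,L')$, and obtains the proposition immediately by specializing to the trivial link. No differences worth noting.
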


\begin{figure}[b]
    \centering
    \includegraphics[width=0.8\textwidth]{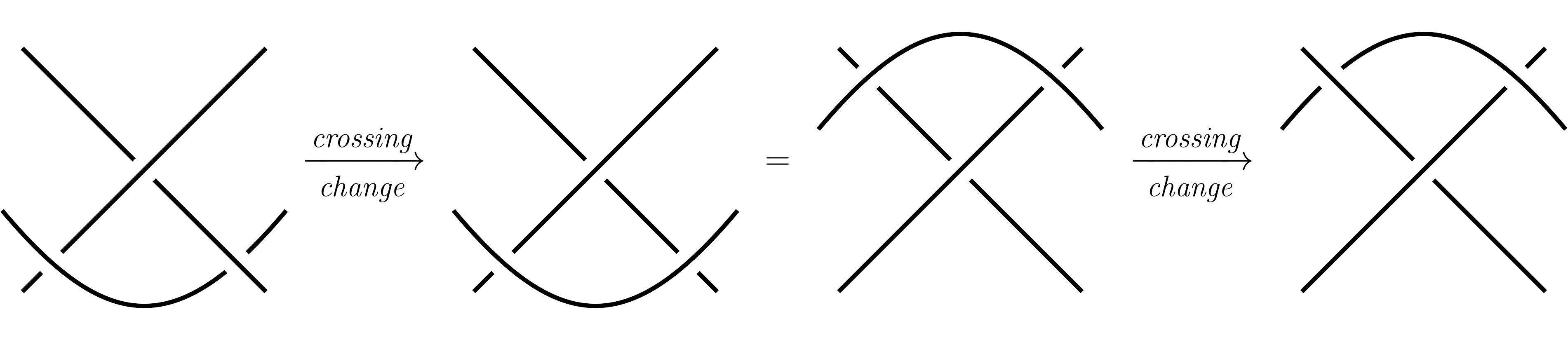}
    \caption{Two crossing changes are necessary to perform a $\Delta$-move.}
    \label{fig:delta_crossing}
\end{figure}

This lower bound may achieve the $\Delta$-unlinking number for a link. For instance, consider the link $L9a2$ (as seen in Figure \ref{fig:L9a2}). By \cite{nagel2015unlinking}, we know $u(L9a2) = 3$ and so $u^\Delta(L9a2) \geq 2$. Moreover, there exists a $\Delta$-pathway comprising only two $\Delta$-moves: a $\Delta$-move transforms $L9a2$ into the split union $3_1\#0_1$ which is again transformed by a $\Delta$-move into the 2-component trivial link. The inequality of Proposition \ref{prop:half_unlinking_number} may be strict; see table in Section \ref{sec:table}.

\begin{figure}[h]
    \centering
    \includegraphics[width=0.25\textwidth]{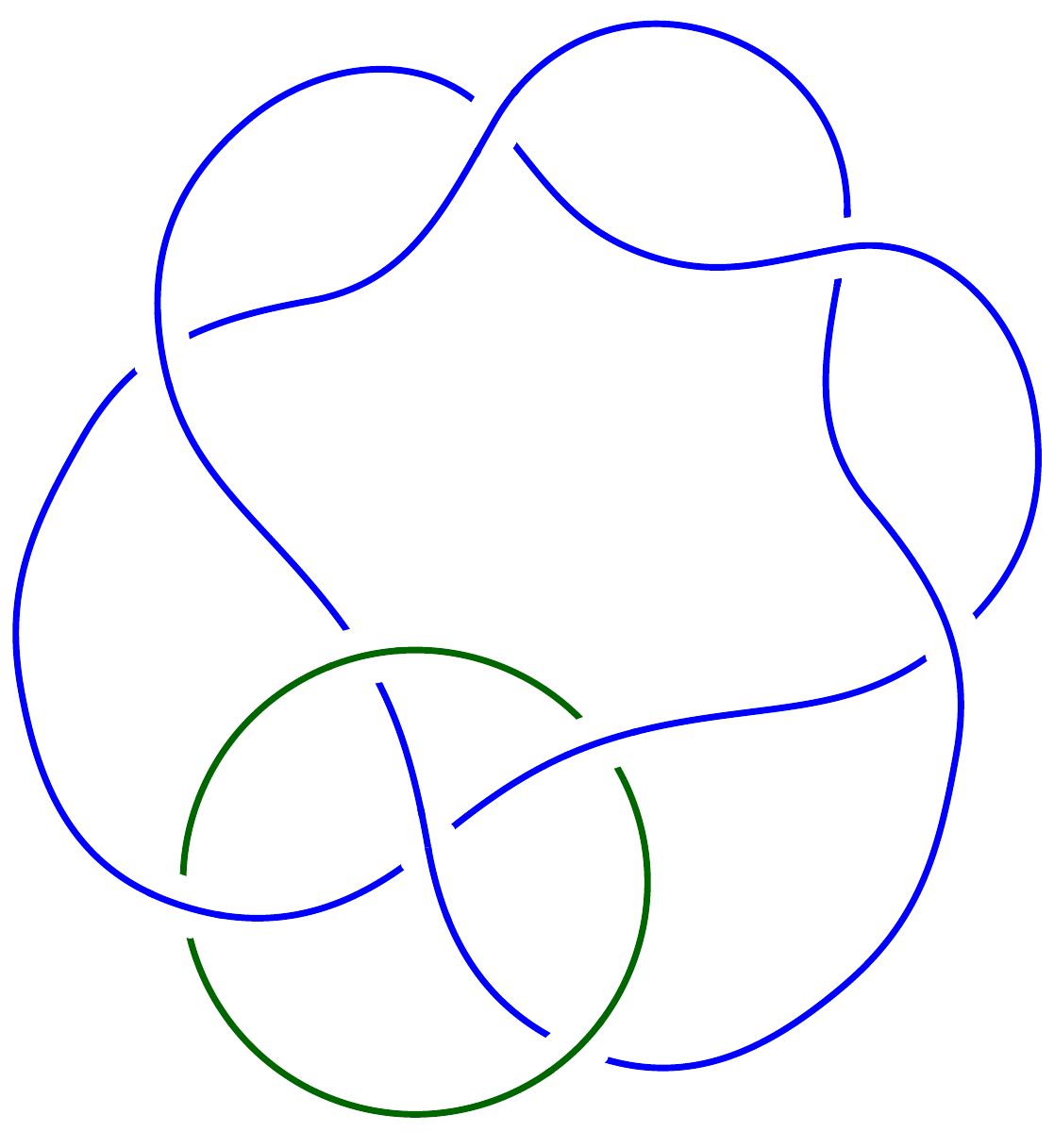}
    \caption{The link $L9a2$.}
    \label{fig:L9a2}
\end{figure}

\begin{proposition}
    \label{prop:self_delta_inequality}
    Given an algebraically split link $L = L_1 \sqcup L_2 \sqcup \cdots \sqcup L_m$, \[u^\Delta(L)\geq u^\Delta(L_1) + u^\Delta(L_2) + \cdots + u^\Delta(L_m).\]
    
    \noindent Moreover, if we have equality, then $L$ is self $\Delta$-equivalent to the trivial link.
\end{proposition}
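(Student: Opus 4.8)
The plan is to take an optimal sequence of $\Delta$-moves realizing $u^\Delta(L)$ and restrict it to each individual component. The engine of the argument is an observation I would isolate as a lemma: a $\Delta$-move that is \emph{not} a self $\Delta$-move on a component $L_i$ leaves the knot type of $L_i$ unchanged. To see this, recall that a $\Delta$-move is local, taking place in a ball $B$ meeting the link in three strands (as in Figure \ref{fig:delta_move}), and that it is of ``Borromean type'': after deleting any one of the three strands, the remaining two form a trivial two-strand tangle both before and after the move, hence are isotopic rel $\partial B$. Consequently, if $L_i$ contributes at most two of the three strands, then deleting the remaining strand(s)—all of which belong to components other than $L_i$—turns the move into an ambient isotopy of $L_i$. (When $L_i$ contributes zero or one strand this is immediate; the only substantive case is two strands, which is precisely the Borromean-triviality just described.) Thus only a self $\Delta$-move, using all three strands of $L_i$, can alter the knot type of $L_i$, in which case it acts as a genuine $\Delta$-move on the knot $L_i$.

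Granting the lemma, I would fix an optimal pathway $L = L^{(0)} \to L^{(1)} \to \cdots \to L^{(N)} = 0_1^m$ with $N = u^\Delta(L)$, and for each $i$ delete all components other than $L_i$ throughout. This produces a sequence of moves on $L_i$ alone that begins at $L_i$ and ends at the unknot. By the lemma, every move that is not a self $\Delta$-move on $L_i$ becomes an isotopy, while each self $\Delta$-move on $L_i$ survives as a $\Delta$-move on $L_i$. Hence $L_i$ is carried to the unknot by exactly $s_i$ $\Delta$-moves, where $s_i$ denotes the number of self $\Delta$-moves on $L_i$ occurring in the pathway, and therefore $s_i \geq u^\Delta(L_i)$.

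To conclude, I would observe that every self $\Delta$-move in the pathway is a self $\Delta$-move on exactly one component, so $\sum_{i=1}^m s_i$ counts a subset of the $N$ moves; writing $t$ for the number of remaining (non-self) moves gives $N = \sum_i s_i + t$. Combining,
\[u^\Delta(L) = N = \sum_{i=1}^m s_i + t \;\geq\; \sum_{i=1}^m u^\Delta(L_i) + t \;\geq\; \sum_{i=1}^m u^\Delta(L_i),\]
which is the desired inequality. For the equality clause, note that $u^\Delta(L) = \sum_i u^\Delta(L_i)$ forces $t = 0$, so the optimal pathway consists entirely of self $\Delta$-moves, exhibiting $L$ as self $\Delta$-equivalent to $0_1^m$.

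The main obstacle is the lemma—specifically, making precise that the two strands of $L_i$ in the move ball are isotopic rel boundary before and after a non-self $\Delta$-move. I would handle this by appealing to the explicit local model of the $\Delta$-move, checking directly from the tangle picture (or equivalently from the clasp-pass description in Figure \ref{fig:clasp_pass_moves}) that erasing one strand leaves an unknotted, unlinked pair of arcs in both the ``before'' and ``after'' configurations. Everything else is bookkeeping, together with the triangle inequality already recorded above.
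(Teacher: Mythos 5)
Your proof is correct and follows essentially the same route as the paper's: the key observation in both is that only a self $\Delta$-move can alter the knot type of a component (since deleting one strand from the move's local ball leaves a tangle that is trivial, and identical rel boundary, before and after). Your write-up simply makes the paper's terse argument explicit, adding the Borromean-triviality justification of the lemma and the bookkeeping $N=\sum_i s_i + t$ that also cleanly yields the equality clause.
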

\begin{proof}
    \begin{figure}[h]
        \centering  
        \includegraphics[width=0.32\textwidth]{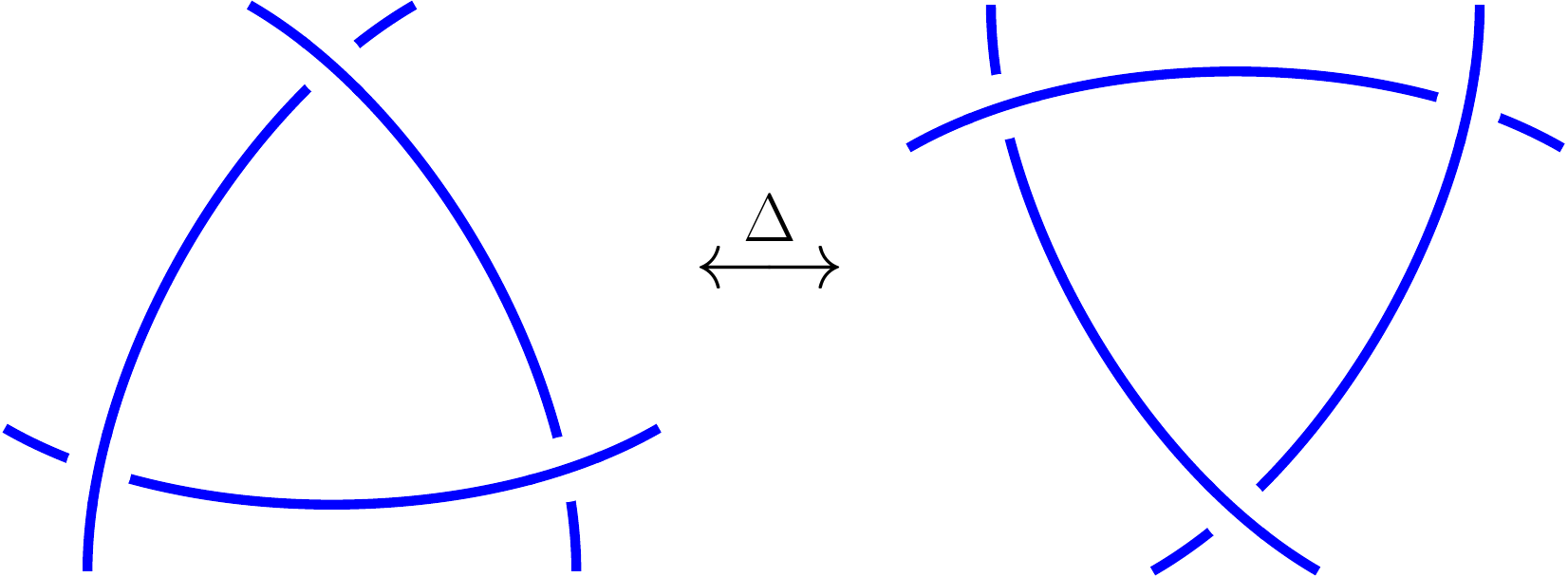}
        \includegraphics[width=0.32\textwidth]{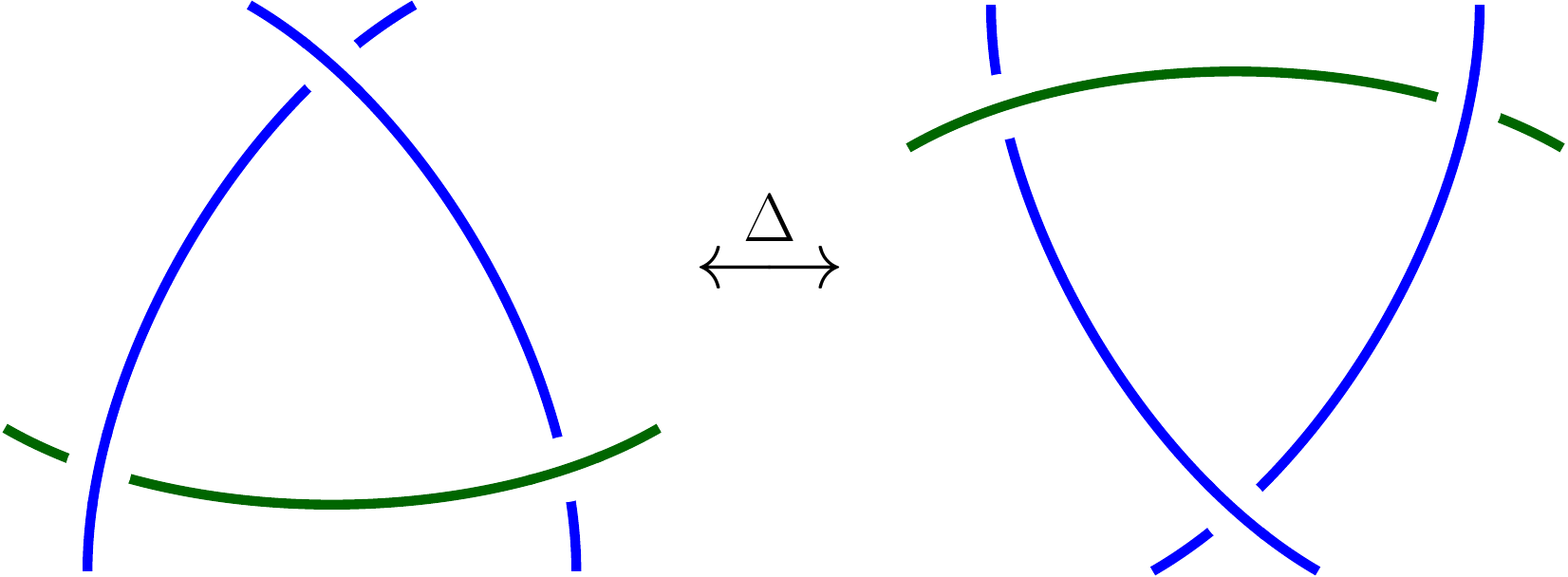}
        \includegraphics[width=0.32\textwidth]{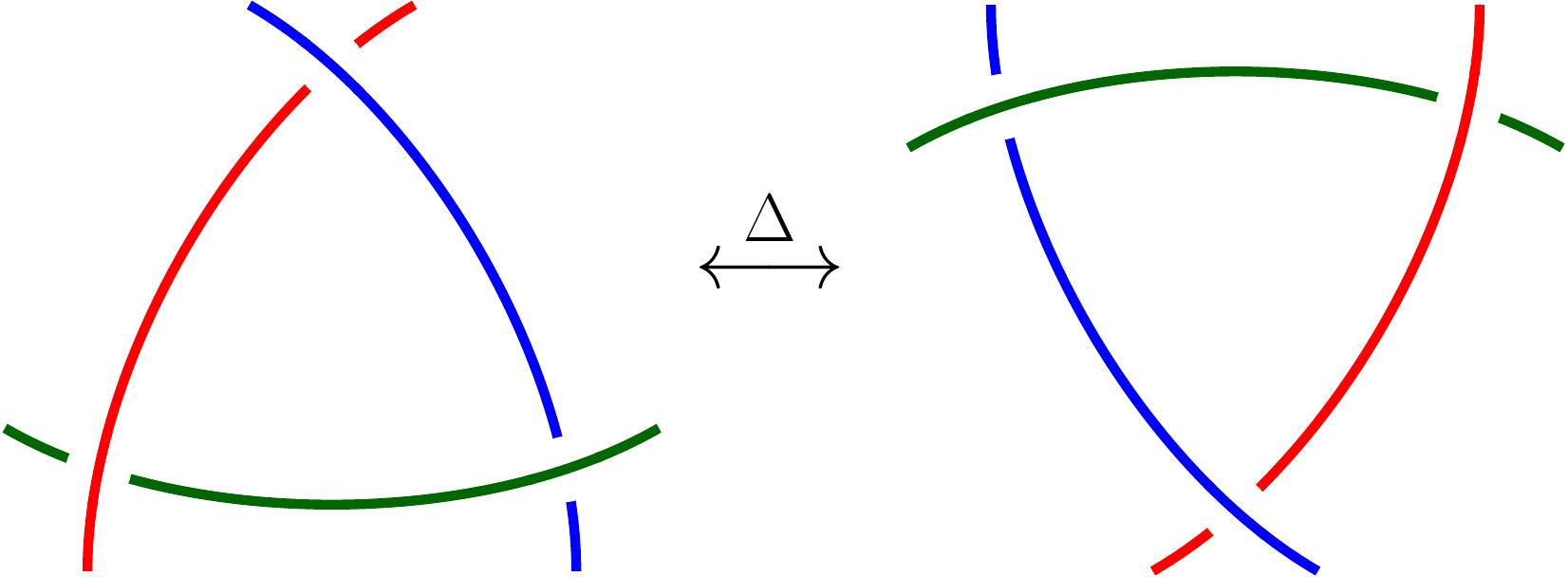}
        \caption{Left to right: A self $\Delta$-move, a $\Delta$-move involving two components, and a $\Delta$-move involving three components.}
        \label{fig:delta_types}
    \end{figure}
    
    Transforming the link $L$ into the trivial link requires unknotting each component. Note that only self $\Delta$-moves modify the knot type of any component of a link; see Figure \ref{fig:delta_types}. Moreover, a self $\Delta$-move only changes the knot type of a single component. The inequality follows.
    
    Now, suppose we have equality. Then unknotting the components with self $\Delta$-moves is sufficient to obtain the trivial link. Thus $L$ is self $\Delta$-equivalent to the trivial link.
\end{proof}

Observe, however, that the converse of the last part of the proposition fails. That is, there exist links which are self $\Delta$-equivalent to the trivial link that have $u^\Delta(L) > u^\Delta(L_1) + \cdots + u^\Delta(L_m)$. For instance, the Bing double of a knot is an algebraically split link and a boundary link \cite{cimasoni2006slicing} and thus by Corollary \ref{cor:self_delta_classification} it is self $\Delta$-equivalent to the trivial link, but the components of a Bing double are unknotted.

\subsection{4-genus}

Recall that the 4-genus $g_4(L)$ of a link $L = L_1 \sqcup L_2 \sqcup \cdots \sqcup L_m$ is defined as 
\[g_4(L)=\min\left(\sum_{i=1}^m g(F_i)\mid F_1\sqcup\cdots\sqcup F_m\hookrightarrow B^4,\; \partial F_i=L_i \right),\]
where the minimum is over smooth embeddings of the disjoint, oriented surfaces $F_1, F_2, \ldots, F_m$ in the 4-ball $B^4$. Meanwhile, the slice genus $g^*(L)$ is the minimal genus of a single such embedded surface that has $L$ as its boundary. In particular, $g_4(L)\geq g^*(L)$.

\begin{theorem}
    \label{thm:4_genus_links_distance}
    Given $\Delta$-equivalent links $L$ and $L'$, we have 
    \[d^\Delta_G(L,L') \geq |g_4(L) - g_4(L')|.\]
\end{theorem}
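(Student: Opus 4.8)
The plan is to reduce the statement to the case of a single $\Delta$-move and then control the change in $4$-genus produced by one move. Since $d^\Delta_G(L,L')$ is realized by a finite sequence of $\Delta$-moves $L = L_0 \to L_1 \to \cdots \to L_d = L'$ with $d = d^\Delta_G(L,L')$, the triangle inequality for the absolute value gives $|g_4(L) - g_4(L')| \le \sum_{i=0}^{d-1} |g_4(L_i) - g_4(L_{i+1})|$. Thus it suffices to prove the claim: if $L'$ is obtained from $L$ by a single $\Delta$-move, then $|g_4(L) - g_4(L')| \le 1$. Because the $\Delta$-move relation is symmetric, it is in fact enough to establish the one-sided bound $g_4(L') \le g_4(L) + 1$.

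To prove the claim I would realize the $\Delta$-move by a cobordism in a collar $S^3 \times [0,1]$ and glue it to a genus-minimizing surface for $L$. Fix disjoint oriented surfaces $F_1 \sqcup \cdots \sqcup F_m \hookrightarrow B^4$ with $\partial F_i = L_i$ and $\sum_i g(F_i) = g_4(L)$. Outside the ball supporting the $\Delta$-move the cobordism is the product $L \times [0,1]$; inside the ball I would build the interpolating surface from the description of the $\Delta$-move as two crossing changes (Figure \ref{fig:delta_crossing}), or equivalently from the clasp-pass form (Figure \ref{fig:clasp_pass_moves}). Each crossing change is realized by a product annulus carrying one transverse double point; pushing the picture into the interior of $B^4$ and using the extra dimension, I would resolve these double points by embedded tubes to obtain an honestly embedded cobordism $C = C_1 \sqcup \cdots \sqcup C_m$ with $\partial C_i = L_i \sqcup L_i'$. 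Capping $\bigsqcup_i F_i$ with $C$ yields disjoint surfaces bounding the components of $L'$, whose total genus exceeds $g_4(L)$ by exactly the total genus of $C$.

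The heart of the argument, and the step I expect to be the main obstacle, is showing that $C$ can be chosen with total genus at most $1$ while remaining a disjoint union indexed by the components of the link. A naive count treats the two crossing changes independently, costing one handle apiece and yielding only the weaker bound $d^\Delta_G \ge \tfrac12|g_4(L) - g_4(L')|$. To recover the sharp constant I would exploit that the two crossing changes of a $\Delta$-move are not arbitrary: in the clasp-pass description they occur between a single pair of sheets, so both double points can be removed by one embedded tube running between those sheets, contributing a single handle. Extra care is required precisely because $g_4$, unlike the slice genus $g^*$, demands disjoint surfaces, one per component: I must verify that this handle can be attached within a single component's surface rather than tubing together surfaces of distinct components. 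The $4$-dimensional room makes the resolution embedded and separates the remaining sheets automatically, but confirming that the handle lands on one component, so that $C$ is genuinely a disjoint union of the required form, is the delicate point that distinguishes this bound from the easier analogue for slice genus.

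Granting the claim, the one-move estimate combines with the telescoping inequality above to give $|g_4(L) - g_4(L')| \le d^\Delta_G(L,L')$, as desired; specializing $L'$ to the trivial link (so that $d^\Delta_G = u^\Delta(L)$ and $g_4 = 0$) then recovers Corollary \ref{cor:4_genus_links_unlinking}.
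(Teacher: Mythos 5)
Your argument is correct in outline but takes a genuinely different route from the paper's. The paper never reduces to a single move and never leaves the embedded category: it uses the description of a $\Delta$-move as band fusion with a copy of the Borromean rings (Figure \ref{fig:borro_rings_fusion}), so that $L$ cobounds a disjoint planar cobordism with the split union of $L'$ and $n = d^\Delta_G(L,L')$ copies of the Borromean rings; since the Borromean rings bound three disjoint surfaces in $B^4$ of total genus $1$ --- one genus-$1$ surface and two disks (Figure \ref{fig:borro_rings_4g}) --- capping those off and capping $L'$ with surfaces of total genus $g_4(L')$ gives $g_4(L) \le n + g_4(L')$, and symmetry finishes. Your route through crossing changes also works, and the ``delicate point'' you flag does resolve affirmatively, but you should make the key input explicit rather than leaving it as an expected obstacle: the two crossing changes realizing a $\Delta$-move (Figure \ref{fig:delta_crossing}) occur between the \emph{same} pair of strands, at crossings of \emph{opposite} sign --- this is forced, because otherwise one could assign the three strands to components so that the move changes a pairwise linking number. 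Consequently the trace of the move is a disjoint union of immersed annuli, one per component, carrying exactly two double points, both of which are oppositely signed intersections of the same two sheets; a single tube guided by an arc in one of these sheets, with both feet on the other sheet, removes both double points, adds genus $1$ to the surface of a single component, and keeps the cobordism a disjoint union of the required form. (Smoothing each double point separately by an annulus would instead merge two components' surfaces --- exactly the failure mode you worry about --- so the tube, not the smoothing, is the right resolution.) In exchange for being more hands-on, your approach makes transparent why the naive estimate $d^\Delta_G \ge \tfrac12\lvert g_4(L)-g_4(L')\rvert$ from two independent crossing changes improves to the sharp constant; the paper's approach packages the same geometric content into the single clean fact that the Borromean rings have $4$-genus $1$ realized by disjoint surfaces.
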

\begin{proof}
    Suppose $d^{\Delta}_G (L,L') = n$. Since a $\Delta$-move can be represented as fusion with the Borromean rings \cite{murakami1989certain} (see Figure \ref{fig:borro_rings_fusion}), $L$ is the result of fusion of the split union of $L'$ with $n$ copies of the Borromean rings $B_1, B_2, \ldots, B_n$. Note that each component $L_i$ of $L$ is fused with a component of the Borromean rings $B_j$ exactly when the arc in the corresponding $\Delta$-move belongs to $L_i$. Thus there exist embeddings of disjoint, oriented surfaces $F_1, \ldots, F_m$ in $S^3 \times [0,1]$ such that $F_i \cap (S^3 \times \{0\}) = L_i$ and  $F_i \cap (S^3 \times \{1\})$ is the fusion of $L_i'$ with the components of the Borromean rings that correspond to arcs of $\Delta$-moves belonging to $L'_i$.
    \begin{figure}
        \centering
        \includegraphics[width=0.5\textwidth]{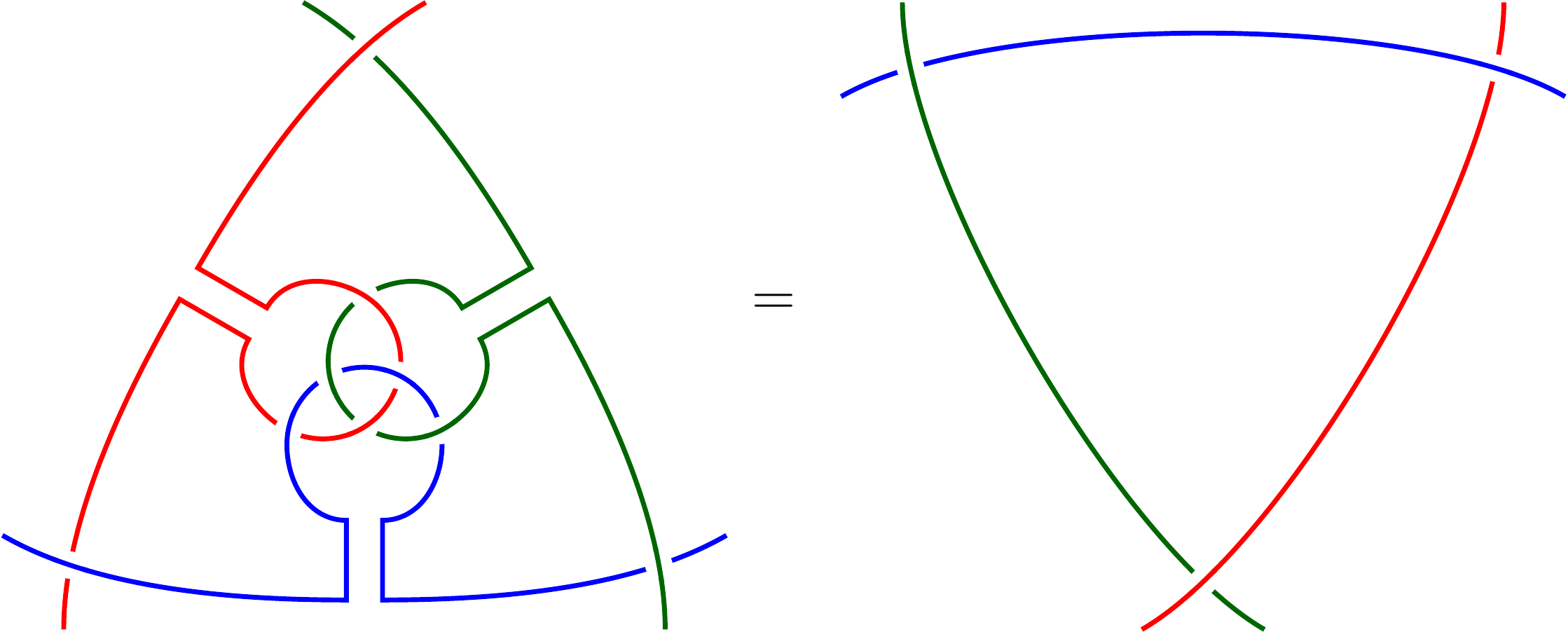}
        \caption{A $\Delta$-move achieved by fusion with the Borromean rings.}
        \label{fig:borro_rings_fusion}
    \end{figure}
    
    By fusing a component of the Borromean rings with itself, isotopying, then fusing the components back together, as in Figure \ref{fig:borro_rings_4g} (cf. \cite{sugishita1983triple}), we see that each $B_i$ bounds three disjoint surfaces: one surface of genus 1 and two disks. On our surfaces $F_1,\ldots,F_m$ we can thus cap off the components of $B_i$, contributing $n$ to the total genus. See Figure \ref{fig:disjoint_surfaces}. Also, we can cap off $L_1',\ldots,L_m'$ with disjoint surfaces that each have total genus $g_4(L')$.
    \begin{figure}[b]
        \centering
        \includegraphics[width=0.6\textwidth]{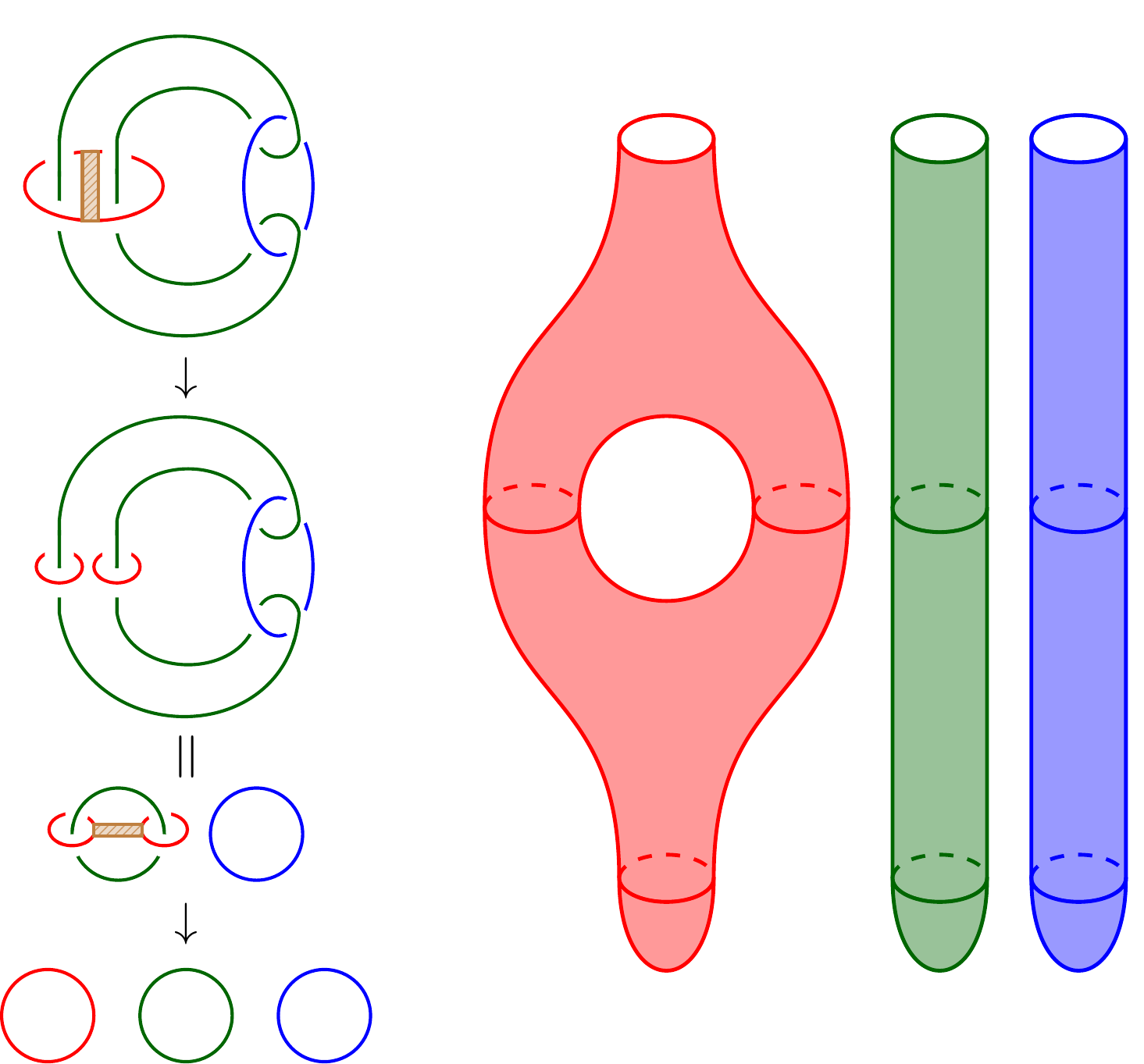}
        \caption{The Borromean rings have 4-genus of 1 from a genus 1 surface and two disks.}
        \label{fig:borro_rings_4g}
    \end{figure}
    
    Thus the components of $L$ bound disjoint, oriented surfaces with total genus $d^\Delta_G(L,L') + g_4(L')$, giving
    \[g_4(L) \leq d^\Delta_G(L,L') + g_4(L').\]
    
    \noindent Since $\Delta$-moves are reversible, by symmetry we similarly have,
    \[g_4(L') \leq d^\Delta_G(L,L') + g_4(L).\]
    
    \noindent The result follows.
\end{proof}

\noindent By letting $L'$ be the trivial link in Theorem \ref{thm:4_genus_links_distance}, we have the following corollary.
\begin{corollary}
    \label{cor:4_genus_links_unlinking}
    Given an algebraically split link $L$,
    \[u^{\Delta}(L) \geq g_4(L).\]
\end{corollary}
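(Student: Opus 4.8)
The plan is to apply Theorem \ref{thm:4_genus_links_distance} directly, specializing the second link $L'$ to be the trivial link $0_1^m$ with the same number of components as $L$. Since $L$ is algebraically split, the Murakami--Nakanishi classification guarantees that $L$ is $\Delta$-equivalent to $0_1^m$, so the $\Delta$-Gordian distance $d^\Delta_G(L, 0_1^m)$ is well defined and equals $u^\Delta(L)$ by the very definition of the $\Delta$-unlinking number.

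First I would record that the trivial link has vanishing $4$-genus: each of its $m$ components bounds a disjointly embedded disk in $B^4$, and disks have genus zero, so $g_4(0_1^m) = 0$. Substituting $L' = 0_1^m$ into the inequality of Theorem \ref{thm:4_genus_links_distance} then gives
\[u^\Delta(L) = d^\Delta_G(L, 0_1^m) \geq |g_4(L) - g_4(0_1^m)| = g_4(L),\]
which is exactly the claimed bound. There is no genuine obstacle in this argument; the theorem does all the work, and the only point needing verification is the vanishing of $g_4(0_1^m)$, which is immediate from the disk-bounding property of the unlink.
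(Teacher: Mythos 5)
Your proposal is correct and matches the paper's argument exactly: the paper also obtains the corollary by setting $L'$ equal to the trivial link in Theorem \ref{thm:4_genus_links_distance}, with the vanishing of $g_4(0_1^m)$ and the identity $d^\Delta_G(L,0_1^m)=u^\Delta(L)$ left implicit. Your write-up simply makes those two routine observations explicit.
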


Note that the bound also holds in the topological category, since any smooth embedding of a surface is locally flat and hence $g_4^{top}(L) \leq g_4(L)$.

\begin{figure}[h]
    \centering
    \includegraphics[width=0.8\textwidth]{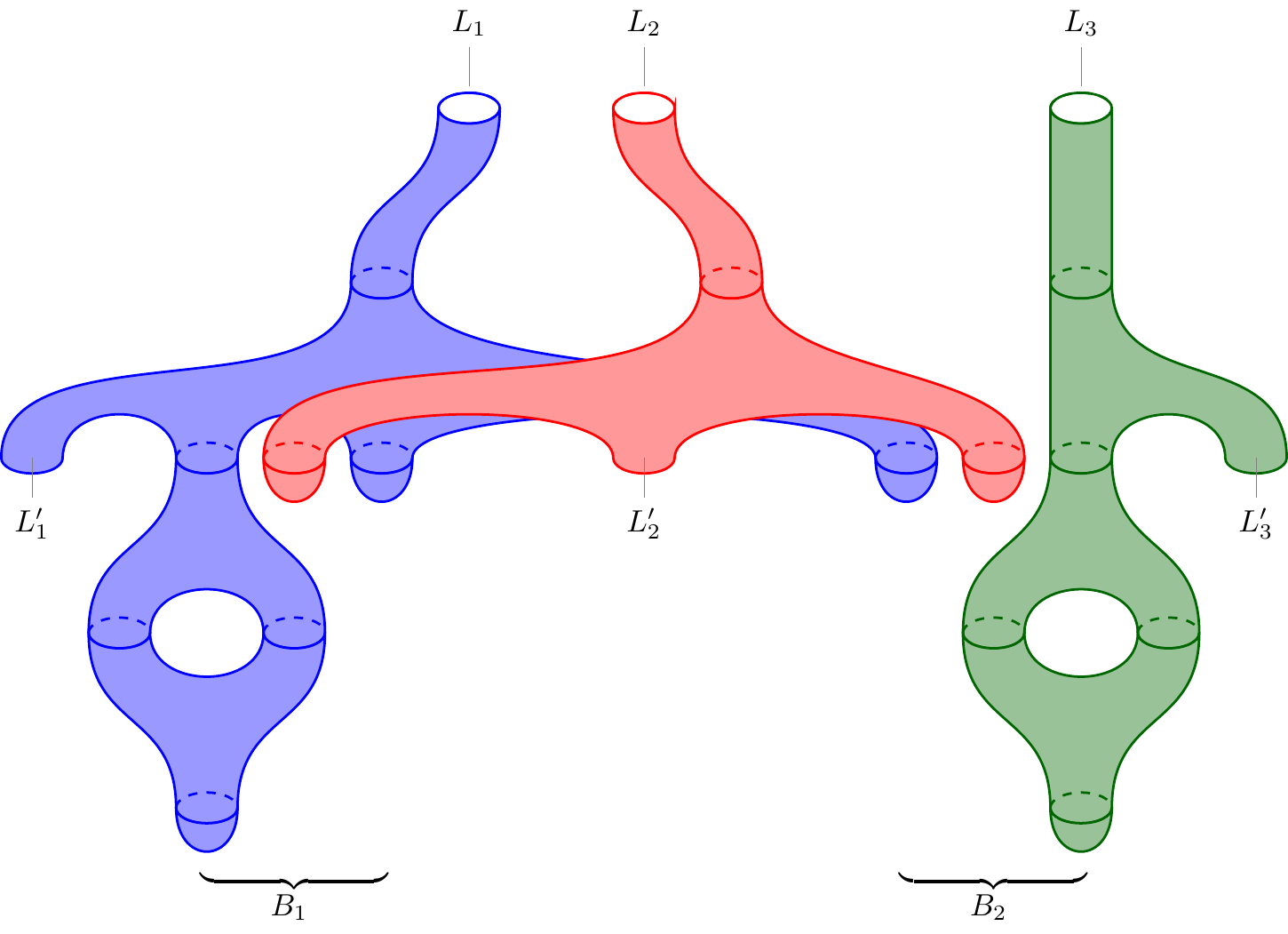}
    \caption{Capping off sets of Borromean rings, each contributing 1 to the 4-genus.}
    \label{fig:disjoint_surfaces}
\end{figure}
\section{Other methods for determining $\Delta$-unlinking number}
\label{sec:other_methods}

\subsection{Arf Invariant}

Recall, a link $L = L_1 \sqcup L_2 \sqcup \cdots \sqcup L_m$ is called a {\it proper link} if
\[\sum_{1 \leq i < j \leq m} lk(L_i,L_j) \equiv 0 \pmod{2}.\]

Robertello showed that the Arf invariant is well-defined for proper links $L$ \cite{hoste1984arf,robertello1965invariant}. In particular, if a proper link $L$ cobounds a planar surface with a knot $K$ then we may define $\arf(L) := \arf(K)$.

\begin{theorem}
    \label{thm:arf_links}
    Given $\Delta$-equivalent proper links $L$ and $L'$, we have \[d^\Delta_G(L,L') \equiv \arf(L) + \arf(L') \pmod{2}.\]
\end{theorem}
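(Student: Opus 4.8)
The plan is to reduce the theorem to a single statement about one $\Delta$-move: \emph{a single $\Delta$-move changes the Arf invariant of a proper link by exactly $1$ modulo $2$.} Granting this claim, the theorem follows at once. Fix a $\Delta$-pathway $L = L^{(0)} \to L^{(1)} \to \cdots \to L^{(n)} = L'$ realizing $n = d^\Delta_G(L,L')$. Since a $\Delta$-move preserves all pairwise linking numbers, every intermediate link $L^{(k)}$ has the same linking numbers as $L$ and is therefore proper, so $\arf(L^{(k)})$ is defined at each stage. Applying the single-move claim $n$ times gives $\arf(L') \equiv \arf(L) + n \pmod 2$, that is, $d^\Delta_G(L,L') = n \equiv \arf(L) + \arf(L') \pmod 2$.

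To prove the single-move claim, suppose $\tilde L$ is obtained from a proper link $L$ by one $\Delta$-move supported in a small disk $D$ in which three arcs of $L$ meet. First I would realize $L$ as cobounding a planar surface with a knot obtained by fusion: choosing $m-1$ disjoint bands attached to the components of $L$ \emph{away from} $D$ and fusing along them produces a knot $K$, and the collar annuli of the components together with the bands form a connected genus-$0$ surface with boundary $L \sqcup K$, so that $\arf(L) = \arf(K)$. Because each component of $L$ meets the complement of $D$ and $S^3 \setminus D$ is path-connected, such bands can always be chosen to miss $D$ and to connect all components. Applying the \emph{same} bands to $\tilde L$ yields a knot $\tilde K$ with $\arf(\tilde L) = \arf(\tilde K)$.

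The purpose of keeping the bands outside $D$ is localization: inside $D$ the knot $K$ agrees strand-for-strand with $L$, while outside $D$ we have $K = \tilde K$. Hence $\tilde K$ is obtained from $K$ by performing the very same $\Delta$-move inside $D$, now a (self) $\Delta$-move on the single component $K$. It is known that a $\Delta$-move changes the second Conway coefficient $a_2$ of a knot by $\pm 1$ \cite{murakami1989certain, nakamura1998delta}, and since $\arf(K) \equiv a_2(K) \pmod 2$, a single $\Delta$-move flips the Arf invariant of a knot. Therefore $\arf(\tilde L) = \arf(\tilde K) \equiv \arf(K) + 1 \equiv \arf(L) + 1 \pmod 2$, which is the claim.

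The main obstacle I anticipate is the geometric bookkeeping in the middle step: one must verify that the fusion bands can genuinely be chosen disjoint from the $\Delta$-disk $D$ while still fusing $L$, and simultaneously $\tilde L$, into a knot cobounding a valid planar surface, and that modifying the link inside $D$ does not disturb this surface away from $D$. Once this localization is in hand the remainder is purely formal, since all the analytic content is offloaded onto the known knot-theoretic fact that a $\Delta$-move toggles $a_2 \bmod 2$, together with the well-definedness of $\arf$ for proper links via fusion \cite{hoste1984arf, robertello1965invariant}.
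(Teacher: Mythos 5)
Your argument is correct, but it follows a genuinely different route from the paper's. You localize: reduce to the claim that a single $\Delta$-move flips $\arf$ of a proper link, prove that claim by fusing the link into a knot along bands chosen away from the $\Delta$-disk (so that the move descends to a self $\Delta$-move on the fused knot $K$), and then invoke the known knot-level fact that a $\Delta$-move toggles $a_2 \equiv \arf \pmod 2$. The one point you rightly must check --- and do --- is that every intermediate link in the pathway is proper, so $\arf$ is defined at each stage. The paper instead works globally: it models the entire length-$n$ pathway at once as a fusion of the split union $L \sqcup B_1 \sqcup \cdots \sqcup B_n$ with $n$ copies of the Borromean rings, fuses $L'$ to a knot cobounding a planar surface with that split union, and concludes $\arf(L') \equiv \arf(L) + \sum_i \arf(B_i) \equiv \arf(L) + n \pmod 2$ using additivity of $\arf$ over split unions of proper links and $\arf(B_i)=1$; this avoids any discussion of intermediate links but requires knowing the Arf invariant of the Borromean rings and the split-union additivity. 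Your version offloads all the analytic content onto the established knot case (essentially Murakami--Nakanishi/Okada), at the cost of the band-localization bookkeeping, which you handle correctly: the bands can indeed be pushed off the $\Delta$-ball since each component meets its complement and the complement is connected, and the trace of the fusion is a connected planar surface, so Robertello's definition applies to both $K$ and $\tilde K$ with the same bands. Both proofs are sound; yours is arguably the more modular reduction, the paper's the more self-contained geometric picture (and it parallels the paper's 4-genus argument in Theorem \ref{thm:4_genus_links_distance}).
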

\begin{proof}
    Suppose $d^\Delta_G(L,L') = n$. Representing the $\Delta$-move as band fusion with the Borromean rings \cite{murakami1989certain} (see Figure \ref{fig:borro_rings_fusion}), $L'$ is the result of the fusion of the split union of $L$ with $n$ copies of the Borromean rings $B_1, B_2, \ldots, B_n$. Thus, for each component of $L$, we can construct disjoint surfaces $F_1, F_2, \ldots, F_m$ embedded in $S^3 \times [0,1]$. Now, we may fuse the components of $L'$ to obtain a knot $K$. But $L' = L \sqcup B_1 \sqcup B_2 \sqcup \cdots \sqcup B_n$. Thus, there exists a planar surface cobounded by $K$ and $L'$ and hence $\arf(K) = \arf(L') = \arf(L \sqcup B_1 \sqcup B_2 \sqcup \cdots \sqcup B_n)$. Then
        \[\arf(L') \equiv \arf(L) + \arf(B_1) + \arf(B_2) + \cdots + \arf(B_n) \pmod{2}. \]
    
    \noindent And since $\arf(B_i) = 1$, we conclude
    \[\arf(L') + \arf(L) \equiv n \pmod{2}.\qedhere\] 
\end{proof}

\noindent By letting $L'$ be the trivial link in Theorem \ref{thm:arf_links}, we obtain the following corollary.

\begin{corollary}
    \label{cor:arf}
    Given an algebraically split link $L$,
    \[u^\Delta(L) \equiv \arf(L) \pmod{2}.\]
\end{corollary}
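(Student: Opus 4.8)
The plan is to obtain this statement as the specialization of Theorem \ref{thm:arf_links} to the case where the second link is the trivial link, so that almost all of the real work is already done and the proof reduces to checking hypotheses and a single base computation. First I would verify that Theorem \ref{thm:arf_links} actually applies. An algebraically split link $L = L_1 \sqcup \cdots \sqcup L_m$ has $lk(L_i,L_j) = 0$ for all $1 \leq i < j \leq m$, so in particular $\sum_{i<j} lk(L_i,L_j) = 0 \equiv 0 \pmod 2$, which means $L$ is a proper link. The trivial link $0_1^m$ is likewise proper, since all of its pairwise linking numbers vanish, and (as established in the introduction) $L$ is $\Delta$-equivalent to $0_1^m$, so the $\Delta$-Gordian distance $d^\Delta_G(L,0_1^m)$ is defined and both links lie in the class to which the theorem applies.

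Next I would simply unwind the definition: the $\Delta$-unlinking number is exactly the $\Delta$-Gordian distance from $L$ to the trivial link, so $u^\Delta(L) = d^\Delta_G(L,0_1^m)$. Applying Theorem \ref{thm:arf_links} with $L' = 0_1^m$ then yields
\[u^\Delta(L) = d^\Delta_G(L,0_1^m) \equiv \arf(L) + \arf(0_1^m) \pmod 2.\]

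The only remaining ingredient is the evaluation $\arf(0_1^m) = 0$. For this I would fuse the $m$ unknotted, unlinked components of $0_1^m$ by $m-1$ bands into a single unknot $0_1$; the trace of this fusion is a connected genus-$0$ (planar) surface cobounded by $0_1^m$ and $0_1$, so Robertello's definition gives $\arf(0_1^m) = \arf(0_1) = 0$. Substituting into the congruence above produces $u^\Delta(L) \equiv \arf(L) \pmod 2$, as claimed.

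As for the main obstacle, there is no substantive one: this is a direct corollary, and the only points that genuinely require attention are confirming that algebraically split links sit inside the class of proper links for which Theorem \ref{thm:arf_links} is stated, and carrying out the routine computation of the Arf invariant of the trivial link. Both are immediate, which is why the corollary follows at once from the theorem.
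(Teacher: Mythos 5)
Your proposal is correct and follows exactly the paper's route: the paper derives this corollary by setting $L'$ equal to the trivial link in Theorem \ref{thm:arf_links}, and your additional checks (that algebraically split links are proper, that $u^\Delta(L) = d^\Delta_G(L,0_1^m)$, and that $\arf(0_1^m)=0$) are precisely the details the paper leaves implicit.
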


\begin{example}
    The link $L9a40$ has $g_4(L9a40) = 2$. Thus by Corollary \ref{cor:4_genus_links_unlinking}, $u^\Delta(L9a40)\geq 2$; however, since $\arf(L9a40) = 1$, by Corollary \ref{cor:arf} we have $u^\Delta(L9a40)\geq 3$. In fact, there is a path of three $\Delta$-moves transforming $L9a40$ into the trivial link $0^2_1$: $L9a40 \overset{\Delta}{\longleftrightarrow} mL7a4 \overset{\Delta}{\longleftrightarrow} mL5a1 \overset{\Delta}{\longleftrightarrow} 0^2_1$. See Figure \ref{fig:L9a40_pathway}.
\end{example}

\begin{figure}[h]
    \centering
    \includegraphics[width=0.9\textwidth]{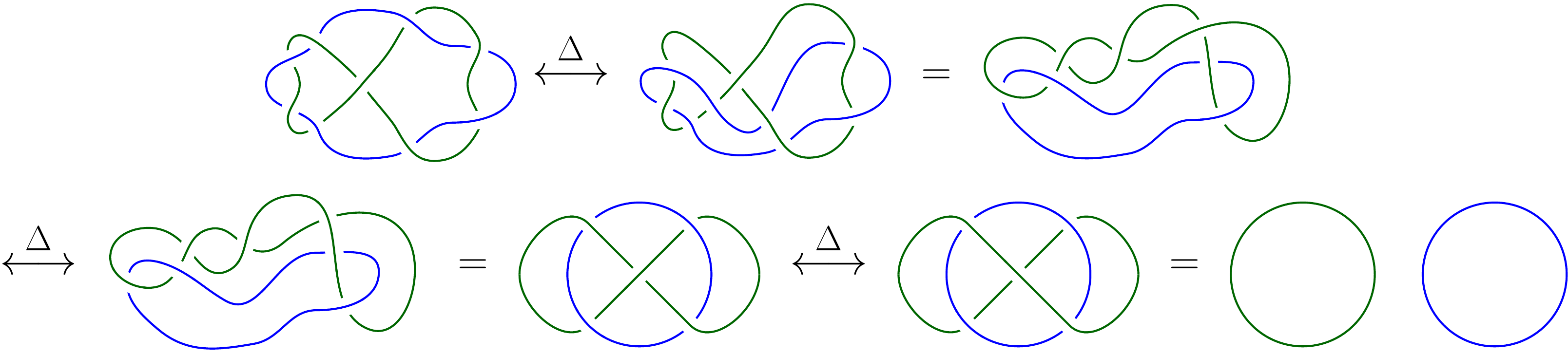}
    \caption{A sequence of $\Delta$-moves unlinking $L9a40$.}
    \label{fig:L9a40_pathway}
\end{figure}

Note it immediately follows from Corollary \ref{cor:arf} that a $\Delta$-move necessarily changes the link type of a proper link. This is not the case for non-proper links such as the Hopf link; see Figure \ref{fig:hopf_delta}.
\begin{figure}[h]
    \centering  
    \includegraphics[width=0.5\textwidth]{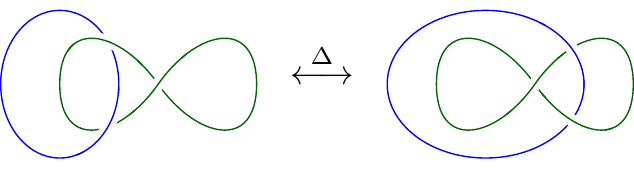}
    \caption{A Hopf link transformed into itself by a $\Delta$-move.}
    \label{fig:hopf_delta}
\end{figure}
\subsection{Milnor's Invariants}
\label{sec:milnor}

A self $\Delta$-move is a $\Delta$-move that only involves arcs from the same component of a link, as in Figure \ref{fig:delta_types}(a). We have the following classification of links up to self $\Delta$-equivalence:

\begin{corollary}[Corollary 1.5 in \cite{yasuhara2009self}]
    \label{cor:self_delta_classification}
    A link $L$ is self $\Delta$-equivalent to a trivial link if and only if $\bar{\mu}_L(I) = 0$ for any $I$ with $r(I) \leq 2$.
\end{corollary}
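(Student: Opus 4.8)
The plan is to prove the two implications separately, working throughout with the realization of a $\Delta$-move as band fusion with a copy of the Borromean rings (Figure~\ref{fig:borro_rings_fusion}), and recalling that $r(I)$ denotes the maximum number of times any single index is repeated in the multi-index $I$, so that the condition $r(I) \leq 2$ selects exactly those Milnor invariants in which no component of $L$ appears more than twice. First I would dispose of the well-definedness issue: once all lower-length $\bar{\mu}$-invariants vanish, the higher invariants $\bar{\mu}_L(I)$ are honest integers rather than residues, so the condition ``$\bar{\mu}_L(I) = 0$ for all $I$ with $r(I) \leq 2$'' is unambiguous, and the trivial link satisfies it vacuously.

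\emph{Necessity.} I would show that a single self $\Delta$-move leaves every $\bar{\mu}_L(I)$ with $r(I) \leq 2$ unchanged, so that no link self $\Delta$-equivalent to the trivial link can have such an invariant nonzero. Realize the move as the fusion of all three components of a Borromean ring into a single component $L_i$ of $L$. The Borromean rings have vanishing linking numbers and a single nonzero triple invariant $\bar{\mu}(123) = \pm 1$; under the fusion identifying the three labels $1,2,3$ with the one label $i$, the behaviour of Milnor invariants under band fusion shows that the only invariants of the resulting link that can change are those in which $i$ occurs at least three times, i.e. those with $r(I) \geq 3$. In particular linking numbers $\bar{\mu}(ij)$ and triple numbers $\bar{\mu}(ijk)$ with distinct indices, as well as all $r(I) \leq 2$ invariants, are preserved, which gives the forward implication.

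\emph{Sufficiency.} This is the substantive direction, and I expect it to be the main obstacle. I would use Goussarov--Habiro clasper calculus: express $L$ as the result of surgery on the trivial link along a disjoint union of tree claspers graded by degree, and argue by induction on degree, using the fact that the Milnor invariants are the leading-order obstruction to simplifying the claspers, with $\bar{\mu}_L(I)$ detecting the tree type $I$. The key step is to show that a clasper whose associated index sequence repeats each component at most twice can be realized, modulo lower-degree claspers, by a genuine self $\Delta$-move on the relevant component, so that the hypothesis $\bar{\mu}_L(I) = 0$ for all $r(I) \leq 2$ lets me cancel, degree by degree, every obstruction to self $\Delta$-triviality. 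The hard part is precisely the clasper bookkeeping: verifying that the slides, splittings, and cancellations needed to remove the trees can be arranged so that every move keeps all three strands on a single component (hence remains a self $\Delta$-move), and that the induction closes with no residual obstruction living outside the range $r(I) \leq 2$.

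As a practical shortcut for the paper, I would note that the corollary is a specialization of Yasuhara's general classification of links up to self $\Delta$-equivalence: taking $L'$ to be the trivial link collapses the full invariant comparison to the single requirement that every $\bar{\mu}_L(I)$ with $r(I) \leq 2$ vanish, which is exactly the stated condition.
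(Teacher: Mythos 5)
The paper does not prove this statement at all: it is imported verbatim as Corollary~1.5 of \cite{yasuhara2009self}, and your closing remark---that one should simply cite Yasuhara's classification---is exactly what the authors do. Judged as an actual proof, however, your sketch has a genuine gap in the sufficiency direction. The necessity half is essentially sound: realizing a self $\Delta$-move as fusion of all three Borromean components into a single component $L_i$ and arguing that only invariants repeating the index $i$ at least three times can change is the right idea, though the assertion about ``the behaviour of Milnor invariants under band fusion'' is itself a nontrivial theorem (it is the $k=2$ case of the Fleming--Yasuhara result that $\bar{\mu}(I)$ with $r(I)\leq k$ is invariant under self $C_k$-moves), not something you can wave at. Your well-definedness remark is fine, since the indeterminacy of $\bar{\mu}_L(I)$ is a gcd of invariants indexed by subsequences $J$ of $I$, which satisfy $r(J)\leq r(I)\leq 2$ and hence vanish by hypothesis.

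The sufficiency direction is the entire content of Yasuhara's theorem, and your paragraph defers all of it to ``clasper bookkeeping.'' The difficulty is not bookkeeping in any routine sense: one must show that every tree clasper whose index multiset repeats no component more than twice can be traded, modulo higher-degree claspers and ambient isotopy, for self $\Delta$-moves, and that the induction on degree terminates---for an $m$-component link this requires controlling invariants of length up to $2m$ and handling the interaction between claspers on different components. None of the lemmas that make this work (IHX-type relations for claspers, the splitting of leaves across components, the realization results) are identified in your sketch, so as written it is a plan for a proof rather than a proof. Since the paper treats the statement as a black-box citation, the appropriate resolution is your final paragraph: cite \cite{yasuhara2009self} and do not attempt to reprove it.
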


Here $\bar{\mu}_L(I)$ denotes Milnor's $\bar{\mu}$ invariants which measure the higher order linking of a link, introduced in \cite{milnor1954link, milnor1957isotopy}. For an $m$-component link, the multiindex $I = \{i_1 i_2 \cdots i_n\}$ takes values $1 \leq i_1,i_2,\ldots,i_n \leq m$, possibly repeated; $r(I)$ denotes the maximum number of times the indices $i_k$ repeats a value.

In particular, for 2-component algebraically split links, if $\bar{\mu}_L(1122) \neq 0$, then $L$ is not self $\Delta$-equivalent to the trivial link. It then follows from Proposition \ref{prop:self_delta_inequality} that $u^\Delta(L) > u^\Delta(L_1) + u^\Delta(L_2)$ and thus $u^{\Delta}(L) \geq u^{\Delta}(L_1) + u^{\Delta}(L_2) + 1$, improving the lower bound for $u^{\Delta} (L)$.

We can calculate $\bar{\mu}_L(1122)$ for a link $L$ using the link's Alexander polynomial.

\begin{theorem}[Theorem 2 in \cite{sturm1990arf}]
    \label{thm:mu_formula}
    A 2-component link $L$ has Alexander polynomial of the form $\Delta_L(x,y) = (x - 1)(y - 1) f(x,y)$ and
    \[|\bar{\mu}_L(1122)| = |f(1,1)|.\]
\end{theorem}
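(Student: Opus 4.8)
The plan is to prove the two assertions separately: the factorization $\Delta_L(x,y)\doteq(x-1)(y-1)f(x,y)$, where $\doteq$ denotes equality up to a unit $\pm x^a y^b$, and then the evaluation $|f(1,1)|=|\bar{\mu}_L(1122)|$.

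For the factorization I would appeal to the Torres conditions for the multivariable Alexander polynomial. For a 2-component link $L=L_1\sqcup L_2$ the first Torres condition reads
\[\Delta_L(x,1)\doteq\frac{x^{\ell}-1}{x-1}\,\Delta_{L_1}(x),\qquad \ell=lk(L_1,L_2).\]
Since $L$ is algebraically split, $\ell=0$ and the right-hand side vanishes identically, so $\Delta_L(x,1)=0$. Viewing $\Delta_L$ as a polynomial in $y$ over the domain $\mathbb{Z}[x^{\pm1}]$ and applying the factor theorem, $(y-1)\mid\Delta_L(x,y)$; the symmetric argument (exchanging the two components) gives $(x-1)\mid\Delta_L(x,y)$, and as $x-1$ and $y-1$ are coprime primes we obtain the stated factorization with $f\in\mathbb{Z}[x^{\pm1},y^{\pm1}]$. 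Because the multivariable Alexander polynomial is only defined up to the units $\pm x^a y^b$, the quotient $f$ is determined up to such a unit, and every unit takes the value $\pm1$ at $(x,y)=(1,1)$; this is exactly why the statement is phrased with absolute values, and it makes $|f(1,1)|$ a genuine invariant. Concretely, $f(1,1)$ is the coefficient of $(x-1)(y-1)$ in the expansion of $\Delta_L$ about $(1,1)$.

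To evaluate this coefficient I would pass to Milnor's description of $\bar{\mu}$ via the Magnus expansion. Present $\pi=\pi_1(S^3\setminus L)$ by meridians $x,y$ with longitudes $\lambda_1,\lambda_2$, and send the free group to power series in noncommuting variables $X,Y$ by $x\mapsto 1+X$, $y\mapsto 1+Y$. By definition $\bar{\mu}_L(1122)$ is the coefficient of the monomial $XXY$ in the Magnus expansion of $\lambda_2$, and for an algebraically split 2-component link all Milnor invariants of length $\le 3$ vanish, so this coefficient carries no indeterminacy and is a well-defined integer. The Alexander polynomial, meanwhile, is the order of the Alexander module, which I would present by Fox-differentiating the Wirtinger relators and abelianizing. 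The content of the theorem is that the lowest-order behaviour of this presentation at the augmentation $x,y\mapsto1$ is governed by the longitudes, whose first nontrivial Magnus terms appear only in degree $3$ once $\ell=0$.

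The technical heart of the argument---and the step I expect to be the main obstacle---is to match the leading coefficient $f(1,1)$ to the Magnus coefficient $\bar{\mu}_L(1122)$. I would do this by relating iterated Fox derivatives evaluated at the augmentation ideal to the terms of the Magnus expansion of the longitudes, tracking which monomials survive; the outcome should be that the lowest-degree part of $\Delta_L$ about $(1,1)$ is $\pm\bar{\mu}_L(1122)\,(x-1)(y-1)$. This is the bookkeeping that Murasugi, Traldi, and Levine develop in the general theory expressing the leading coefficients of the Alexander (or Conway) polynomial of an algebraically split link in terms of Milnor invariants, and adapting it to length $4$ is where the real work lies. As an independent check that also neutralizes the sign ambiguity, I would identify both sides with the Sato--Levine invariant $\beta(L)$: for a 2-component link with $\ell=0$ one has $\bar{\mu}_L(1122)=\pm\beta(L)$, while Cochran's computation of the coefficients of the Conway polynomial expresses the same leading coefficient of $\Delta_L$ as $\pm\beta(L)$. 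Since only $|f(1,1)|$ is claimed, neither the sign nor the unit indeterminacy $\doteq$ needs to be pinned down, and the theorem follows.
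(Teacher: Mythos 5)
First, a point of comparison: the paper does not prove Theorem \ref{thm:mu_formula} at all --- it is imported verbatim as Theorem 2 of \cite{sturm1990arf} and used purely as a computational tool --- so there is no in-paper argument to measure your proposal against; what follows assesses your outline on its own terms. Your first half is correct and complete. The hypothesis $lk(L_1,L_2)=0$, which you rightly supply (the paper's phrasing omits it, and the factorization is false without it: the Hopf link has $\Delta\doteq1$), kills $\Delta_L(x,1)$ via the Torres condition; since $\mathbb{Z}[x^{\pm1},y^{\pm1}]/(y-1)\cong\mathbb{Z}[x^{\pm1}]$ this places $\Delta_L$ in the ideal $(y-1)$, and the symmetric statement together with the fact that $x-1$ and $y-1$ are non-associate primes in a UFD gives the factorization. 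Your remarks on the unit ambiguity, on why only $|f(1,1)|$ is well defined, and on why $\bar{\mu}_L(1122)$ is a genuine integer when $lk=0$ are all correct.

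The gap is exactly where you flag it, and it is a real one: nothing in the proposal actually establishes $|f(1,1)|=|\bar{\mu}_L(1122)|$. The Magnus-expansion route is described but not executed --- the passage from iterated Fox derivatives of the Wirtinger presentation to the degree-three term of the longitude is the entire content of the theorem, and ``tracking which monomials survive'' names the computation without performing it. The fallback via the Sato--Levine invariant does not close the gap either: Cochran's result identifies $\beta(L)$ with the coefficient $a_3$ of the Conway polynomial $\nabla_L(z)=a_3z^3+\cdots$, and to convert that into a statement about $f(1,1)$ you still need the second Torres condition $\Delta_L(t,t)\doteq(t-1)\Delta_{\hat L}(t)$ together with the normalization relating the one-variable Alexander polynomial to $\nabla_L$; that change of variables is precisely the kind of bookkeeping that can introduce or absorb a factor of $(t-1)$, and it is where a vanishing coefficient could silently go wrong. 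So the architecture is the standard and correct one, but as written the proposal is a plan for a proof, with the central identification delegated to citations rather than carried out.
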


\begin{example}
    The link $L9a2$ has Alexander polynomial \cite{knotatlas}
    \[\Delta_{L9a2} (x,y) = \frac{(x-1)(y-1) \left(y^4 - y^3 + y^2 - y + 1\right)}{\sqrt{x} y^{5/2}}\]
    
    \noindent and thus $|\bar{\mu}_{L9a2}(1122)| = |f(1,1)| = 1$. Hence, $u^\Delta(L9a2)\geq u^\Delta(L9a2_1) + u^\Delta(L9a2_2) + 1 = 2$ since one of the components is an unknot and the other is a trefoil (which has $\Delta$-unknotting number 1). In fact, $u^\Delta(L9a2)=2$ since there exists the following $\Delta$-pathway:
    \[L9a2 \overset{\Delta}{\longleftrightarrow} 3_1 \# 0_1 \overset{\Delta}{\longleftrightarrow} 0^2_1.\]
\end{example}
\subsection{L9a18}
\label{sec:L9a18}

Some algebraically split links require additional methods to determine the $\Delta$-unlinking number. For instance, the link $L9a18$ can be transformed into the trivial link $0_1^2$ by three $\Delta$-moves via the pathway
\[L9a18 \overset{\Delta}{\longleftrightarrow} L7a4 \overset{\Delta}{\longleftrightarrow} L5a1 \overset{\Delta}{\longleftrightarrow} 0_1^2.\]

Moreover, since $\arf(L9a18)=1$, we conclude from Corollary \ref{cor:arf} that $u^\Delta(L9a18)$ is 1 or 3.

Suppose $u^\Delta(L9a18) = 1$. Since $|\bar{\mu}_{L9a18}(1122)| = 3 \neq 0$, by Corollary \ref{cor:self_delta_classification} we know $L9a18$ is not self $\Delta$-equivalent to the trivial link. Thus the $\Delta$-move must contain two strands belonging to one component of $L9a18$ and one distinguished strand belonging to the other. As $L9a18$ is invertible, we may deform the diagram via ambient isotopy such that the distinguished strand belongs to a component that is an unknotted circle in the diagram.

\begin{figure}[b]
    \centering  
    \includegraphics[width=0.7\textwidth]{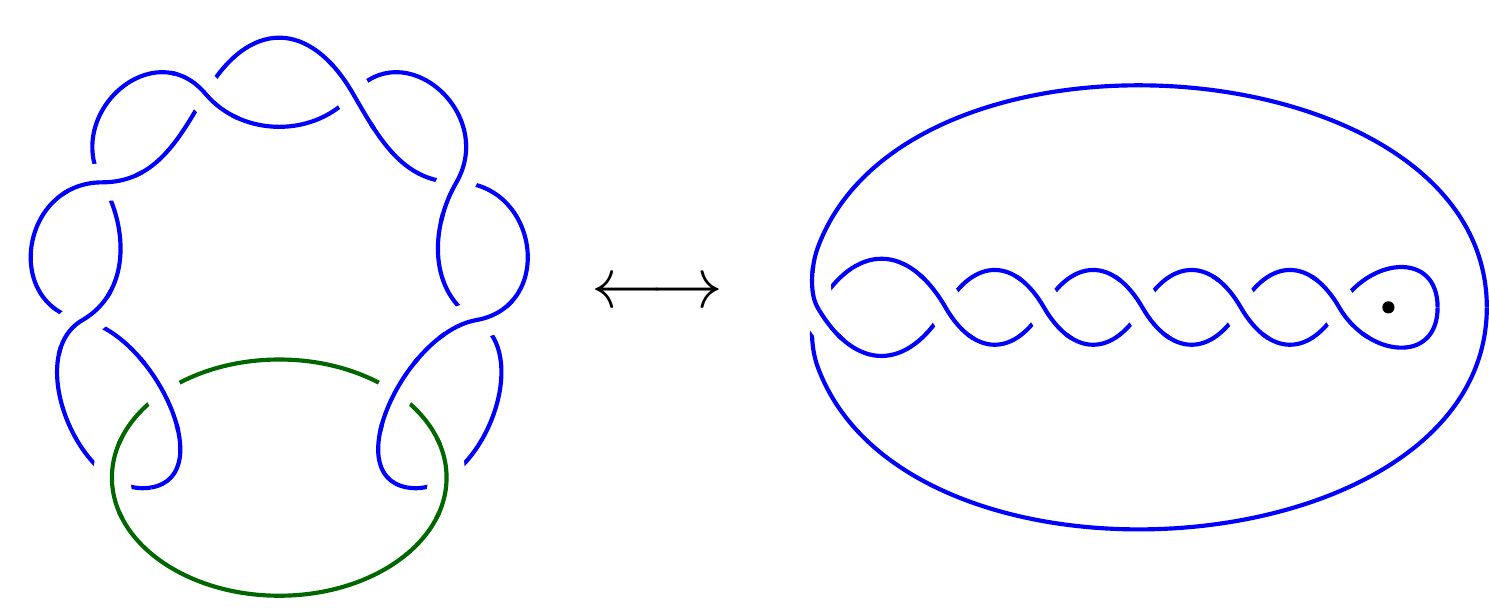}
    \caption{The link $L9a18$ can be represented as a knot in a solid 1-torus.}
    \label{fig:L9a18}
\end{figure}

When a link $L$ has an unknotted circular component, it can be represented as a knot $K_L$ on a punctured diagram, or equivalently a knot in the solid torus; see Figure \ref{fig:L9a18}.

Call a $\Delta$-move a toroidal $\Delta$-move if one arc belongs to an unknotted circular component and the other two arcs belong to the other component of a 2-component link. Then, the toroidal $\Delta$-move has a corresponding move in the punctured diagram or solid torus as depicted in Figure \ref{fig:torodial_delta_move}.

There is a simple numerical invariant of a knot $K$ in the solid torus, denoted $\beta_1(K)$, defined by lifting $K$ to its infinite cyclic cover and calculating the linking number $lk(K_0,K_1)$ \cite{bataineh2015numerical}. Figure \ref{fig:L9a18_cyclic_covering} shows the lift for $K_{L9a18}$ from which we determine $\left|\beta_1(K_{L9a18})\right|=3$.

Since a toroidal $\Delta$-move changes two crossings, it will change $\beta_1$ by at most 2. Hence, as the trivial link has vanishing $\beta_1$, $K_{L9a18}$ cannot be one toroidal $\Delta$-move away from the trivial link. Hence $u^\Delta(L9a18)\neq 1$ so we conclude $u^\Delta(L9a18)=3$.

\begin{figure}[t]
    \centering
    \includegraphics[width=0.7\textwidth]{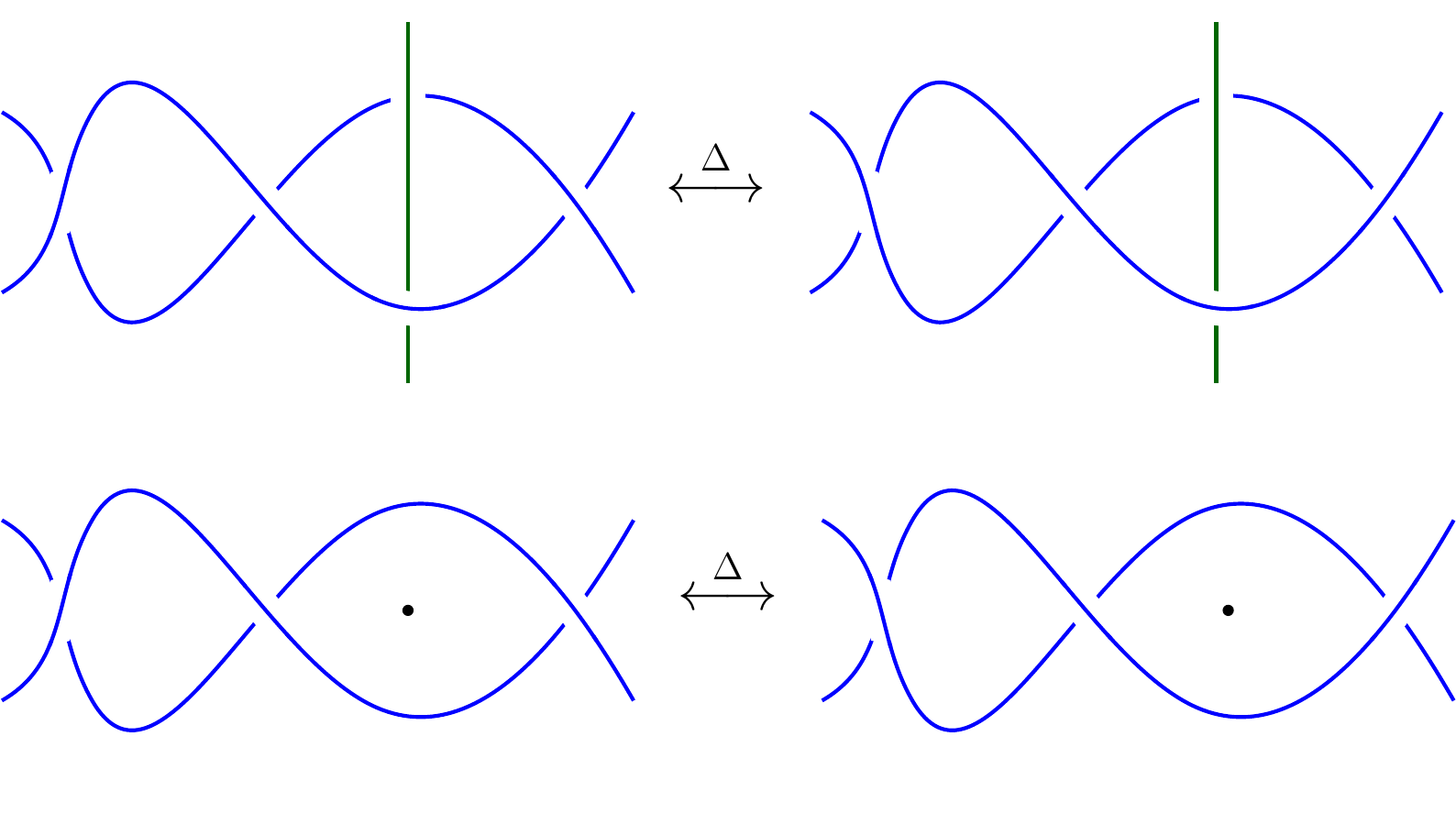}
    \caption{A $\Delta$-move in a punctured diagram.}
    \label{fig:torodial_delta_move}
\end{figure}

\begin{figure}[h]
    \centering
    \includegraphics[width=0.8\textwidth]{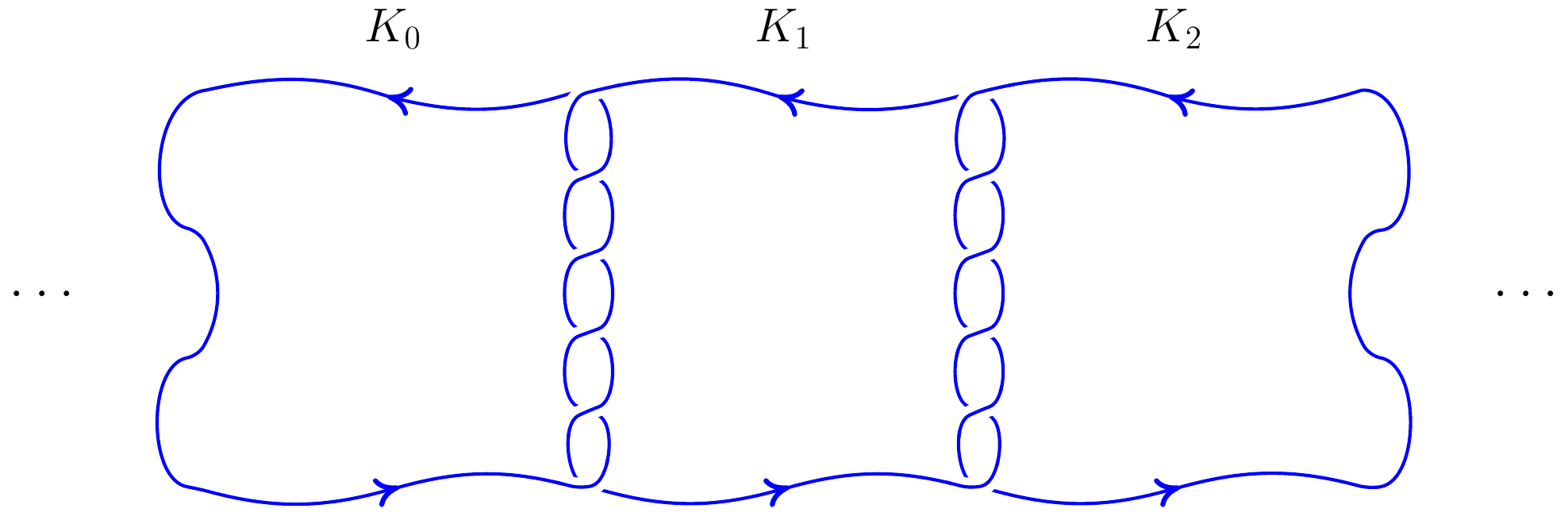}
    \caption{The infinite cyclic cover of $K_{L9a18}$.}
    \label{fig:L9a18_cyclic_covering}
\end{figure}
\section{Table of $\Delta$-unlinking numbers}
\label{sec:table}

We tabulate here the algebraically split prime links with their $\Delta$-unlinking numbers. The Arf invariants are from \cite{montemayor2012nullification}. The unlinking numbers are from \cite{nagel2015unlinking}. The Rolfsen names are from Knot Atlas \cite{knotatlas}. The 4-genus lower bound was calculated using Corollary 1.5 in \cite{powell2017four}. We determined the exact value of the 4-genus of many of the links by band summing to construct explicit upper bounds. The $\bar{\mu}(1122)$ invariant was calculated using Theorem \ref{thm:mu_formula}. The column headers are consistent with the notation in the text, but for clarity we have, in order: link name (Thistlethwaite and Rolfsen), $\Delta$-unlinking number, half of the unlinking number, sum of delta-unknotting numbers, Arf invariant, 4-genus, Milnor $\bar{\mu}$ invariant, and the method(s) used to calculate the delta-unlinking number.

\begin{table}
    \centering
    \caption{$\Delta$-unlinking number and certain invariants for algebraically split links up to 9 crossings.}
    \begin{tabular}{c c c c c c c c c c}
        \toprule
        \multicolumn{2}{c}{Link} & \multirow{2}{*}{$u^\Delta(L)$} & \multirow{2}{*}{$\frac{1}{2} u(L)$} & \multirow{2}{*}{$\sum u^\Delta(L_i)$} & \multirow{2}{*}{$\arf(L)$} & \multirow{2}{*}{$g_4(L)$} & \multirow{2}{*}{$|\bar{\mu}_L(1122)|$} & \multirow{2}{*}{Method(s)} \\
        Thistlethwaite & Rolfsen & & & & & & & \\
        \midrule
        $L5a1$ & $5_2^1$ & 1 & 0.5 & 0 & 1 & 1 & 1 & \small{Prop} \ref{prop:half_unlinking_number} \\
        $L6a4$ & $6_3^2$ & 1 & 1 & 0 & 1 & 1 & -- & \small{Prop} \ref{prop:half_unlinking_number} \\
        $L7a1$ & $7_2^6$ & 1 & 1 & 0 & 1 & 1 & 1 & \small{Prop} \ref{prop:half_unlinking_number} \\
        $L7a3$ & $7_2^4$ & 3 & 1 & 1 & 1 & 2 & 2 & \small{Cor} \ref{cor:4_genus_links_unlinking}, \small{Cor} \ref{cor:arf} \\
        $L7a4$ & $7_2^3$ & 2 & 1 & 0 & 0 & 1 & 2 & \small{Prop} \ref{prop:half_unlinking_number}, \small{Cor} \ref{cor:arf} \\
        $L7n2$ & $7_2^8$ & 2 & 0.5 & 1 & 0 & 1 & 1 & \small{Prop} \ref{prop:half_unlinking_number}, \small{Cor} \ref{cor:arf} \\
        $L8a1$ & $8_2^{13}$ & 1 & 1 & 0 & 1 & 1 & 1 & \small{Prop} \ref{prop:half_unlinking_number} \\
        $L8a2$ & $8_2^{10}$ & 1 & 0.5 & 1 & 1 & 1 & 0 & \small{Prop} \ref{prop:half_unlinking_number} \\
        $L8a4$ & $8_2^{12}$ & 1 & 0.5 & 1 & 1 & 1 & 0 & \small{Prop} \ref{prop:half_unlinking_number} \\
        $L8n2$ & $8_2^{15}$ & 2 & 0.5 & 1 & 0 & 1 & 1 & \small{Prop} \ref{prop:half_unlinking_number}, \small{Cor} \ref{cor:arf} \\
        $L9a1$ & $9_2^{32}$ & 1 & 1 & 0 & 1 & 1 & 1 & \small{Prop} \ref{prop:half_unlinking_number} \\
        $L9a2$ & $9_2^{31}$ & 2 & 1.5 & 1 & 0 & 2 & 1 & \small{Prop} \ref{prop:half_unlinking_number} \\
        $L9a3$ & $9_2^{33}$ & 2 & 1 & 1 & 0 & 1 & 1 & \small{Prop} \ref{prop:half_unlinking_number}, \small{Cor} \ref{cor:arf} \\
        $L9a4$ & $9_2^{18}$ & 4 & 1 & 2 & 0 & 2 & 2 & \small{Cor} \ref{cor:arf}, \small{Sec} \ref{sec:milnor} \\
        $L9a8$ & $9_2^{25}$ & 3 & 1 & 1 & 1 & 1 or 2 & 2 & \small{Cor} \ref{cor:arf}, \small{Sec} \ref{sec:milnor} \\
        $L9a9$ & $9_2^{37}$ & 2 & 1 & 0 & 0 & 1 or 2 & 2 & \small{Prop} \ref{prop:half_unlinking_number}, \small{Cor} \ref{cor:arf} \\
        $L9a10$ & $9_2^{36}$ & 3 & 1.5 & 2 & 1 & 1 or 2 & 2 & \small{Prop} \ref{prop:half_unlinking_number}, \small{Cor} \ref{cor:arf} \\
        $L9a14$ & $9_2^{13}$ & 4 or 6 & 1.5 & 3 & 0 & 3 & 3 & \small{Cor} \ref{cor:4_genus_links_unlinking}, \small{Cor} \ref{cor:arf} \\
        $L9a15$ & $9_2^{15}$ & 3 or 5 & 1.5 & 2 & 1 & 2 & 3 & \small{Cor} \ref{cor:4_genus_links_unlinking}, \small{Cor} \ref{cor:arf} \\
        $L9a17$ & $9_2^{27}$ & 2 or 4 & 1.5 & 1 & 0 & 2 & 3 & \small{Cor} \ref{cor:4_genus_links_unlinking}, \small{Cor} \ref{cor:arf} \\
        $L9a18$ & $9_2^{10}$ & 3 & 1 & 0 & 1 & 1 & 3 & Sec \ref{sec:L9a18} \\
        $L9a35$ & $9_2^9$ & 1 & 1 & 0 & 1 & 1 & 3 & \small{Prop} \ref{prop:half_unlinking_number} \\
        $L9a38$ & $9_2^5$ & 2 & 0.5 & 0 & 0 & 1 or 2 & 4 & \small{Prop} \ref{prop:half_unlinking_number}, \small{Cor} \ref{cor:arf} \\
        $L9a40$ & $9_2^4$ & 3 & 1 & 0 & 1 & 2 & 5 & \small{Cor} \ref{cor:4_genus_links_unlinking}, \small{Cor} \ref{cor:arf} \\
        $L9a42$ & $9_2^{41}$ & 1 & 1 & 0 & 1 & 1 & 3 & \small{Prop} \ref{prop:half_unlinking_number} \\
        $L9a53$ & $9_3^{12}$ & 1 & 1 & 0 & 1 & 1 & -- & \small{Prop} \ref{prop:half_unlinking_number} \\
        $L9a54$ & $9_3^9$ & 3 & 1.5 & 0 & 1 & 2 or 3 & -- & \small{Prop} \ref{prop:half_unlinking_number}, \small{Cor} \ref{cor:arf} \\
        $L9n2$ & $9_2^{46}$ & 4 & 1 & 2 & 0 & 1 & 2 & \small{Cor} \ref{cor:arf}, \small{Sec} \ref{sec:milnor} \\
        $L9n3$ & $9_2^{47}$ & 3 & 0.5 & 2 & 1 & 1 & 1 & \small{Prop} \ref{prop:self_delta_inequality}, \small{Cor} \ref{cor:arf} \\
        $L9n5$ & $9_2^{44}$ & 5 & 1 & 3 & 1 & 2 & 2 & \small{Cor} \ref{cor:arf}, \small{Sec} \ref{sec:milnor} \\
        $L9n6$ & $9_2^{55}$ & 4 & 1 & 3 & 0 & 2 & 1 & \small{Prop}  \ref{prop:self_delta_inequality}, \small{Cor} \ref{cor:arf} \\
        $L9n8$ & $9_2^{56}$ & 3 & 1 & 2 & 1 & 2 & 1 & \small{Prop}  \ref{prop:self_delta_inequality}, \small{Cor} \ref{cor:arf} \\
        $L9n25$ & $9_3^{18}$ & 2 & 1 & 0 & 0 & 1 & -- &  \small{Prop}  \ref{prop:half_unlinking_number}, \small{Cor} \ref{cor:arf} \\
        $L9n27$ & $9_3^{21}$ & 2 & 0.5 & 0 & 0 & 2 & -- &  \small{Prop}  \ref{prop:half_unlinking_number}, \small{Cor} \ref{cor:arf} \\
        \bottomrule
    \end{tabular}
    \label{tab:delta_unlinking}
\end{table}

\clearpage
We can also display the {\it minimal $\Delta$-pathways} as a tree. In Figure \ref{fig:link_graph} each edge represents one $\Delta$-move and the path from a link $L$ to the trivial link is a pathway of minimal length $u^\Delta(L)$. Indeterminate cases are represented by a dashed edge. Recall $L \# L'$ denotes a split union between $L$ and $L'$.

\begin{figure}[h]
    \centering
    \includegraphics[width=\textwidth]{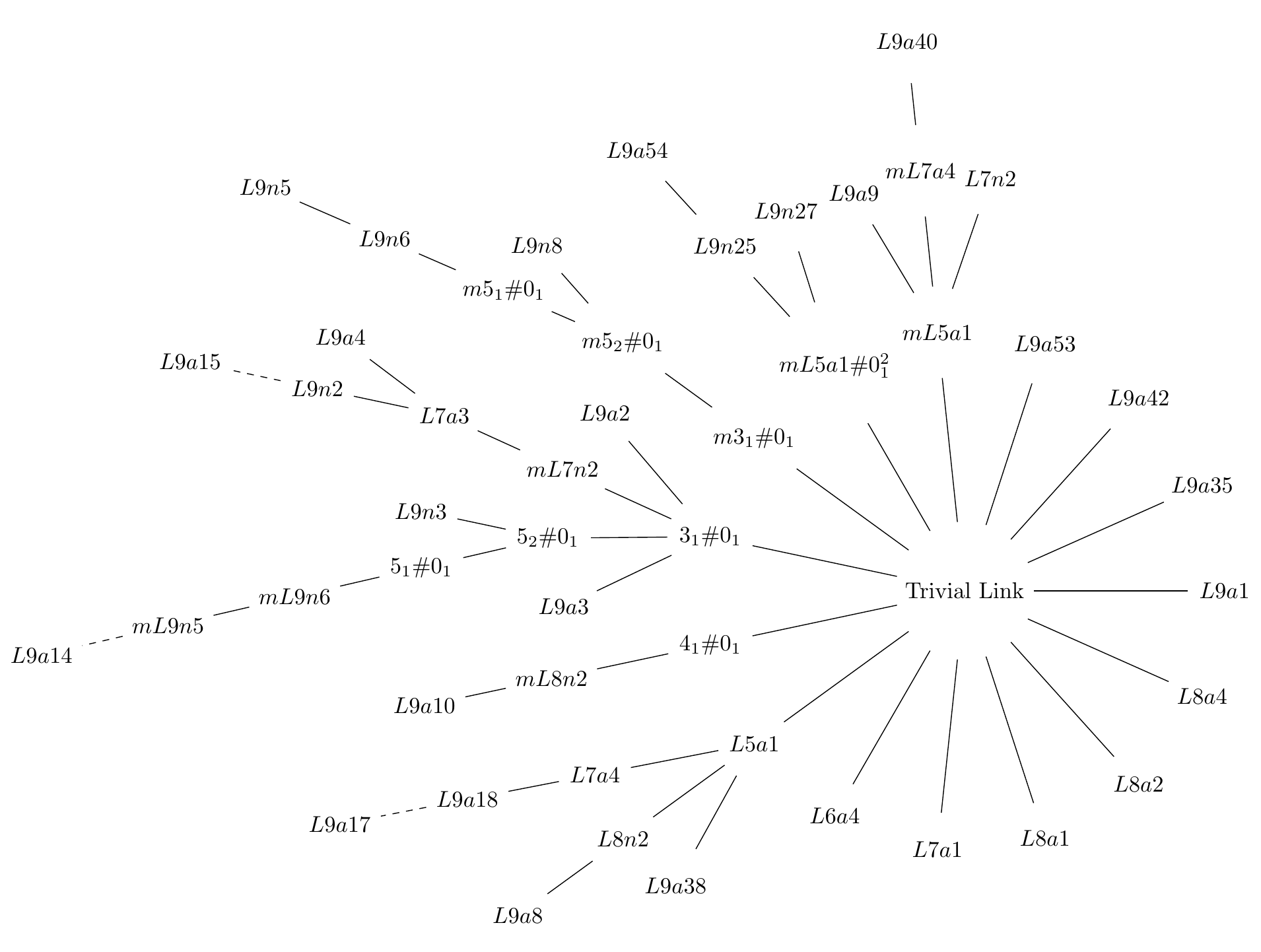}
    \caption{A tree exhibiting $\Delta$-pathways of minimal length to the trivial link.}
    \label{fig:link_graph}
\end{figure}

\clearpage
\printbibliography

@article{murakami1989certain,
  title     = {On a certain move generating link-homology},
  author    = {Murakami, Hitoshi and Nakanishi, Yasutaka},
  journal   = {Mathematische Annalen},
  volume    = {284},
  number    = {1},
  pages     = {75--89},
  year      = {1989},
  publisher = {Springer},
  doi       = {https://doi.org/10.1007/BF01443506}
}

@article{hoste1984arf,
  title     = {The Arf invariant of a totally proper link},
  author    = {Hoste, Jim},
  journal   = {Topology and its Applications},
  volume    = {18},
  number    = {2-3},
  pages     = {163--177},
  year      = {1984},
  publisher = {Elsevier},
  doi       = {https://doi.org/10.1016/0166-8641(84)90008-7}
}

@article{powell2017four,
  title     = {The four-genus of a link, Levine--Tristram signatures and satellites},
  author    = {Powell, Mark},
  journal   = {Journal of Knot Theory and Its Ramifications},
  volume    = {26},
  number    = {02},
  pages     = {1740008},
  year      = {2017},
  publisher = {World Scientific},
  doi       = {https://doi.org/10.1142/S0218216517400089}
}

@article{sturm1990arf,
  title   = {The Arf and Sato link concordance invariants},
  author  = {Sturm Beiss, Rachel},
  journal = {Transactions of the American Mathematical Society},
  volume  = {322},
  number  = {2},
  pages   = {479--491},
  year    = {1990},
  doi     = {https://doi.org/10.1090/S0002-9947-1990-1012525-7}
}

@mastersthesis{montemayor2012nullification,
  title   = {On nullification of knots and links},
  author  = {Montemayor, Anthony},
  year    = {2012},
  school  = {Western Kentucky University}
}

@article{taniyama2002clasp,
  title     = {Clasp-pass moves on knots, links and spatial graphs},
  author    = {Taniyama, Kouki and Yasuhara, Akira},
  journal   = {Topology and its Applications},
  volume    = {122},
  number    = {3},
  pages     = {501--529},
  year      = {2002},
  publisher = {Elsevier},
  doi       = {https://doi.org/10.1016/S0166-8641(01)00189-4}
}

@article{robertello1965invariant,
  title     = {An invariant of knot cobordism},
  author    = {Robertello, Raymond A},
  journal   = {Communications on Pure and Applied Mathematics},
  volume    = {18},
  number    = {3},
  pages     = {543--555},
  year      = {1965},
  publisher = {Wiley Subscription Services, Inc., A Wiley Company New York},
  doi       = {https://doi.org/10.1002/cpa.3160180309}
}

@article{yasuhara2009self,
  title   = {Self delta-equivalence for links whose Milnor’s isotopy invariants vanish},
  author  = {Yasuhara, Akira},
  journal = {Transactions of the American Mathematical Society},
  volume  = {361},
  number  = {9},
  pages   = {4721--4749},
  year    = {2009},
  doi     = {https://doi.org/10.1090/S0002-9947-09-04840-5}
}

@article{nagel2015unlinking,
  title     = {Unlinking information from 4-manifolds},
  author    = {Nagel, Matthias and Owens, Brendan},
  journal   = {Bulletin of the London Mathematical Society},
  volume    = {47},
  number    = {6},
  pages     = {964--979},
  year      = {2015},
  publisher = {Oxford University Press},
  doi       = {https://doi.org/10.1112/blms/bdv072}
}

@misc{knotatlas,
  title   = {The {K}not {A}tlas},
  author  = {Bar-Natan, Dror and Morrison, Scott et al.},
  url     = {http://katlas.org}
}

@article{nakamura1998delta,
  title     = {Delta-unknotting number for knots},
  author    = {Nakamura, K. and Nakanishi, Y. and Uchida, Y.},
  journal   = {Journal of Knot Theory and Its Ramifications},
  volume    = {7},
  number    = {05},
  pages     = {639--650},
  year      = {1998},
  publisher = {World Scientific},
  doi       = {https://doi.org/10.1142/S0218216598000334}
}

@article{bataineh2015numerical,
  title     = {On numerical invariants for knots in the solid torus},
  author    = {Bataineh, Khaled},
  journal   = {Open Mathematics},
  volume    = {13},
  number    = {1},
  year      = {2015},
  publisher = {De Gruyter Open},
  doi       = {https://doi.org/10.1515/math-2015-0084}
}

@article{sugishita1983triple,
  title     = {Triple Points and Knot Cobordism},
  author    = {Sugishita, Kouji and Murakami, Hitoshi},
  volume    = {487},
  year      = {1983},
  publisher = {京都大学数理解析研究所},
  url       = {http://hdl.handle.net/2433/103477}
}

@article{cimasoni2006slicing,
  title     = {Slicing Bing doubles},
  author    = {Cimasoni, David},
  journal   = {Algebraic \& Geometric Topology},
  volume    = {6},
  number    = {5},
  pages     = {2395--2415},
  year      = {2006},
  publisher = {Mathematical Sciences Publishers},
  doi       = {https://doi.org/10.2140/agt.2006.6.2395}
}

@article{milnor1954link,
  title     = {Link Groups},
  author    = {Milnor, John},
  journal   = {Annals of Mathematics},
  volume    = {59},
  number    = {2},
  pages     = {177--195},
  year      = {1954},
  publisher = {Annals of Mathematics},
  doi       = {https://doi.org/10.2307/1969685}
}

@inbook{milnor1957isotopy,
  title     = {Isotopy of Links},
  booktitle = {Algebraic Geometry and Topology},
  author    = {Milnor, John},
  pages     = {280-306},
  year      = {1957},
  publisher = {Princeton University Press},
  doi       = {https://doi.org/10.1515/9781400879915-022}
}

\end{document}